\newcounter{TheoremA}
\newtheorem{ThA}[TheoremA]{Theorem}
\newtheorem{Th}{Theorem}[section]
\newtheorem{Lemma}[Th]{Lemma}
\newtheorem{Prop}[Th]{Proposition}
\theoremstyle{definition}
\newtheorem{Def}[Th]{Definition}
\newtheorem{Example}[Th]{Example}
\theoremstyle{remark}
\newtheorem{Remark}[Th]{Remark}
\newcounter{fig}
\newcommand{\Lie}{\mathcal{L}}
\newcommand{\R}{\mathbb{R}}
\newcommand{\C}{\mathbb{C}}
\newcommand{\scal}[2]{\left\langle#1,#2\right\rangle}
\DeclareMathOperator{\Vol}{Vol}
\newcommand{\norm}[1]{\left\vert \left\vert#1 \right\vert \right\vert }
\title{Curvature of the Completion of the Space of Sasaki Potentials.}
\author{Thomas Franzinetti}
\date{}
\begin{document}
\pagenumbering{arabic}
\begin{center}
\LARGE

\vspace*{1cm}
\textbf{Curvature of the Completion of the Space \\ of Sasaki Potentials.}

\large
\vspace{0.5cm}
Thomas Franzinetti
\end{center}

\normalsize
\vspace{1cm}
\noindent
\textbf{Abstract:} Given a compact Sasaki manifold, we endow the space of the Sasaki potentials with an analogue of Mabuchi metric. We show that its metric completion is negatively curved in the sense of Alexandrov.

\vspace{0.3cm}
\noindent
\textbf{2010 Mathematics Subject Classification:} 31C12, 32U15, 32W20, 53C25, 53C55.

\vspace{0.3cm}
\noindent
\textbf{Key Words:} Sasaki manifolds, Negatively Curved Metric Spaces, Monge-Ampère Equations, Energy Classes.

\section{Introduction}

After being introduced in 1960 by S. Sasaki \cite{sasaki_differentiable_1960} and then studied in the early 70s, Sasaki manifolds seem to have been more or less neglected until the early 90s. One can mention a paper by T. Friedrich and I. Kath published in 1990 \cite{friedrich_7-dimensional_1990} in which they gave a first classification result about Sasaki manifolds. From 1993 onwards, C. Boyer, K. Galicki and B. Mann have made important contributions to the understanding of the geometry and topology of Sasaki manifolds \cite{boyer_3-sasakian_1993, boyer_quaternionic_1993, boyer_3-sasakian_1998}. The year 1998 is a key milestone for Sasaki geometry: the influential paper by J. Maldacena \cite{maldacena1997large} who first proposed the AdS/CFT correspondance marked a significant regain of interest in Sasaki geometry. Indeed, manifolds that are product of anti-de Sitter space with Sasaki manifolds play a crucial role in AdS/CFT correspondance \cite{acharya1998branes, morrison1998nonspherical, Martelli_2008}. Finding examples, obstructions or sufficient conditions for the existence of Sasaki-Einstein metrics (i.e. Sasaki metrics for which the Ricci tensor is proportional to the metric) has led to a large exploration of Sasaki geometry \cite{boyer1998sasakianeinstein, boyer_sasakian_2007, futaki_transverse_2009, Martelli_2008, gauntlett2004, boyerEinsteinSphere2005}. As Einstein metrics are very particular versions of constant scalar curvature metrics or even extremal metrics \cite{CalabiExtremal1982, boyer_extremal_2013} it seems natural to study these more general metrics in the Sasaki world.

Recall that a Sasaki manifold $(M,g)$ is an odd dimensional Riemannian manifold whose metric cone $C(M) = (\R^*_+ \times M, dr^2+r^2 g)$ is Kähler. This synthetic description hides the extremely rich structure of Sasaki manifolds. In particular, $M$, which can be identified with the link $\left\lbrace r = 1 \right\rbrace \subset C(M)$, is a contact manifold with contact form $\eta = 2d^c \log(r)$. It defines a contact bundle $\ker \eta$ on which $\frac{1}{2}d\eta$ is a transverse Kähler form.  Here, $d = \partial + \overline{\partial}$ is the usual decomposition of the differential operator on a Kähler manifold and $d^c$ is defined as $d^c = \frac{i}{2}(\overline{\partial} - \partial)$. Any Sasaki manifold is endowed with a special vector field: the Reeb vector field $\xi$ which is the restriction of $J(r\partial_r)$ to the link $\left\lbrace r = 1 \right\rbrace$. Here $J$ denotes the complex structure on the Kähler metric cone $C(M)$. The restriction $\Phi$ of $J$ to the transverse distribution $\ker\eta$ is called a transverse complex structure. We call $(\xi, \eta, \Phi)$ a \emph{Sasaki structure}.

As a Sasaki manifold is trapped between its Kähler metric cone and its Kähler transverse structure, one can expect that these special metrics we are looking for are closely related to their Kähler counterparts. Kähler-Einstein metrics (i.e. Kähler metrics with Ricci form proportional to the metric itself) have been at the core of intense research over the past forty years \cite{aubin_equation_1978, chen2012kahlereinstein1, chen2012kahlereinstein2, chen2013kahlereinstein, Futaki1983, Tian1997, Tian2015, yau78}. This problem boils down to a non-linear second order PDE: a Monge-Ampère equation \cite{guedj_degenerate_2017}. Kähler-Einstein metrics are examples of constant scalar curvature metrics. The constant scalar curvature Kähler (cscK) metric problem of looking for cscK metrics was initiated by E. Calabi \cite{Calabi1985} and it boils down to a fourth order equation \cite{abreu1997kahler}, it also led to several works (mention for example \cite{berman2014convexity, DonaldsonScalar2001}) until recent major breakthrough by X. Chen and J. Cheng \cite{chen_constant_2017,  chen_constant_2018, chen_constant_2018-1}. The study of cscK metrics requires a deep understanding of the geometry of the space of Kähler metrics in a given Kähler class on a Kähler manifold $(X,\omega)$ \cite{donaldson_Kahler_1999, semmes_complex_1992}, identified with:
\begin{equation}
\label{eq:SpaceOfKahlerPotentials}
\mathcal{H}(X,\omega) = \left\lbrace \phi\in\mathcal{C}^{\infty}(X) \, \vert \, \omega_\phi := \omega + dd^c\phi > 0 \right\rbrace.
\end{equation}
Given the Mabuchi metric \cite{mabuchi_symplectic_1986} on the tangent space at a given $\phi\in\mathcal{H}(X,\omega)$ as being:
\begin{equation}
\label{eq:MabuchiMetric}
\scal{\psi_1}{\psi_2}_\phi = \int_M (\psi_1 \psi_2) \omega_\phi^n \quad \text{for } \psi_1, \psi_2\in T_{\phi}\mathcal{H}(X,\omega) \simeq \mathcal{C}^\infty(X),
\end{equation}
one can consider geodesics between two elements of $\mathcal{H}(X,\omega)$. X. Chen and his collaborators worked intensively in this direction \cite{chen_space_2000, calabi_space_2002, Chen_Tian_2008, Chen_Space_2009, Chen_Sun_2009} proving in particular that this infinite dimensional space is a path metric space with $\mathcal{C}^{1,\overline{1}}$ geodesics. Note that Chu-Tosatti-Weinkove \cite{Chu_regularityC11_2018} has established that geodesics are $\mathcal{C}^{1,1}$-regular, which is known to be optimal regularity thanks to examples of T. Darvas, L. Lempert, L. Vivas \cite{darvas2014morse, darvas_weak_2012, lempert2013geodesics}. T. Darvas then consequently refined the study of the geometry of the space of Kähler metrics \cite{darvas_mabuchi_2014, darvas2017weak, darvas2019isometries} especially identifying its metric completion with a space of weighted finite energy class $\mathcal{E}^2(X, \omega)$ (previously introduced in \cite{guedj_weighted_2007}) and showing that it is non-positively curved in the sense of Alexandrov. For further references and details about $\mathcal{E}^2(X, \omega)$, we refer to \cite{guedj_degenerate_2017}.

Theses advances in the Kähler setting were truly inspirational for the Sasaki setting. In \cite{Martelli_2006, Martelli_2008, futaki_transverse_2009, boyer_sasakian_2007, collins2019sasaki, li2021notes}, Sasaki-Einstein metrics are studied while constant scalar curvature Sasaki metrics are studied in \cite{Legendre_2011, collins2012ksemistability, van_coevering_monge-amp`ere_2015, guan_regularity_2009, apostolov2021weighted, boyer2018relative, he2014transverse}. In this direction, people considered the space of potentials:
$$
\mathcal{H}(M, \xi, d\eta) = \left\lbrace \phi\in\mathcal{C}^{\infty}_B(M),\; d\eta + dd^c\phi > 0 \right\rbrace,
$$
where $\mathcal{C}^{\infty}_B(M)$ is the space of smooth \emph{basic} functions (ie smooth functions which are invariant under the Reeb flow). Analogously to the Kähler setting, the $d^c$-operator, acting on basic functions, is defined as $\frac{i}{2}(\overline{\partial}_B - \partial_B)$ where the $\partial_B$ and $\overline{\partial}_B$ operator are defined in \cite[Section 7]{boyer_sasakian_2007}, see also Section \ref{section:BasicForms}. We omit the subscript $B$ for simplicity. As we will explain in Section \ref{section:BasicForms}, any potential in $\mathcal{H}(M, \xi, d\eta)$ defines a new Sasaki structure on $M$. This infinite dimensional space, whose tangent space at any $\phi\in\mathcal{H}(M, \xi, d\eta)$ is identified with $\mathcal{C}_B^\infty(M)$ is endowed with a Riemannian structure, analogue of the Mabuchi metric \cite{guan_geodesic_nodate, guan_regularity_2009}:
$$
\scal{\psi_1}{\psi_2}_\phi = \int_M (\psi_1 \psi_2) \; \eta\wedge (d\eta + dd^c\phi)^n.
$$
P. Guan and X. Zhang \cite{guan_geodesic_nodate, guan_regularity_2009} proved the existence of $\mathcal{C}^{1,\overline{1}}$ geodesics (Proposition \ref{S.prop:epsGeoJacobi}) using a Monge-Ampère type re-formulation for the geodesic equation (see Sections \ref{Section:GeodesicEquation} and \ref{Section:WeakGeodesics}). They also showed that the Riemannian structure on the tangent space of $\mathcal{H}(M, \xi, d\eta)$ induces a metric $d$ on $\mathcal{H}(M, \xi, d\eta)$:
$$
d(\phi_0, \phi_1) := \inf\left\lbrace   \int_0^1 \sqrt{\scal{\dot{\phi_t}}{\dot{\phi_t}}_{\phi_t}}dt \; ; \; t\mapsto\phi_t \text{ is a smooth path joining } \phi_0 \text{ to } \phi_1 \right\rbrace.
$$
This definition of $d$ is natural but showing that this is indeed a distance is not as easy as for finite dimensional manifolds. W. He and J. Li generalised most of the geometrical results known in the Kähler case to Sasaki setting \cite{he_geometrical_2018} allowing W. He to extend X. Chen and J. Cheng result for constant scalar curvature Sasaki metrics \cite{he2018scalar}. W. He and J. Li \cite{he2018scalar} then used pluripotential theory to study the metric completion of $(\mathcal{H}(M, \xi, d\eta), d)$ and its geometry. Using C. Van Coevering work \cite{van_coevering_monge-amp`ere_2015}, they basically generalized the results known in the Kähler setting \cite{guedj_degenerate_2017}. In their study of the geometry of $\mathcal{H}(M,\xi,d\eta)$, \emph{energy classes} will play a crucial role. The first energy class to be considered is $\mathcal{E}(M, \xi, d\eta)$. This is the space of all quasi-plurisubharmonic functions with full Monge-Ampère mass $\left(\text{i.e. }\int_M \eta\wedge (d\eta + dd^c\phi)^n = \int_M \eta\wedge d\eta^n\right)$. Building on this one can consider the energy class:
$$
\mathcal{E}^{2}(M,\xi,d\eta) := \left\lbrace \phi \in \mathcal{E}(M, \xi, d\eta) \; ; \; \int_M (\phi^2) \eta\wedge(d\eta + dd^c\phi)^n < \infty \right\rbrace.
$$
We refer to \cite{he_geometrical_2018, van_coevering_monge-amp`ere_2015}, to Section \ref{sec:psh} and to Section \ref{Section:DistanceHAndExtention} for the notions of quasi-plurisubharmonicity and these energy classes. Our main result states as follows:
\begin{ThA}
\label{Th:MainTHIntro}
The metric completion, $\mathcal{E}^2(M,\xi,d\eta)$, of $\left(\mathcal{H}(M, \xi, d\eta), d\right)$ is negatively curved in the sense of Alexandrov.
\end{ThA}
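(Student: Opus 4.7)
Following T.~Darvas' strategy in the Kähler setting \cite{darvas_mabuchi_2014}, the plan is to first establish the Alexandrov non-positive curvature inequality
\begin{equation}
d(\phi_0,\phi_t)^2 \leq (1-t)d(\phi_0,\phi_1)^2 + t\, d(\phi_0,\phi_2)^2 - t(1-t)d(\phi_1,\phi_2)^2
\end{equation}
for smooth potentials $\phi_0,\phi_1,\phi_2 \in \mathcal{H}(M,\xi,d\eta)$, where $\phi_t$ is the $t$-midpoint of the weak geodesic from $\phi_1$ to $\phi_2$, and then to extend the inequality to the completion $\mathcal{E}^2(M,\xi,d\eta)$ by density. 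This is the standard \emph{NPC} / $CAT(0)$ condition in Alexandrov geometry.

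On the smooth part $\mathcal{H}$, I would replace the weak geodesic $\phi_t$ by the Guan--Zhang $\epsilon$-geodesics $\phi_t^\epsilon$ provided by Proposition \ref{S.prop:epsGeoJacobi}. Along such a smooth path the function $t\mapsto d(\phi_0,\phi_t^\epsilon)^2$ is $\mathcal{C}^2$, and I would compute its second derivative via Jacobi fields. A pointwise basic curvature identity, which is the Sasaki transcription of the classical Mabuchi--Semmes--Donaldson computation carried out on the transverse Kähler structure, should show that the sectional curvature of the $L^2$-Mabuchi metric on $\mathcal{H}$ is non-positive, up to an $\epsilon$-error coming from the right-hand side of the $\epsilon$-Monge--Amp\`ere equation. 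Letting $\epsilon\to 0$, the $\mathcal{C}^{1,\overline{1}}$ convergence $\phi_t^\epsilon\to\phi_t$ together with convergence of the associated energies to $d(\phi_1,\phi_2)$ would deliver the NPC inequality on $\mathcal{H}$.

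To extend the inequality to $\mathcal{E}^2$, I would approximate any triple $\phi_0,\phi_1,\phi_2\in\mathcal{E}^2$ by sequences $\phi_i^k\in\mathcal{H}$ with $d(\phi_i^k,\phi_i)\to 0$; such approximants are produced by the Sasaki pluripotential machinery of He--Li \cite{he_geometrical_2018} and Van Coevering \cite{van_coevering_monge-amp`ere_2015}, mirroring the Kähler case recalled in \cite{guedj_degenerate_2017}. The key point is that the weak-geodesic construction is $d$-continuous in its endpoints, so the midpoints $\phi_t^k$ converge in $\mathcal{E}^2$ to the $t$-midpoint of the weak geodesic between $\phi_1$ and $\phi_2$, and the NPC inequality is preserved in the limit.

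The main obstacle I anticipate is the combined analytic control needed to descend from smooth $\epsilon$-geodesics all the way to $\mathcal{E}^2$. On one hand, one must justify the $\mathcal{C}^{1,\overline{1}}$ limit of the second-variation identity for fixed smooth endpoints, tracking the error terms arising from the $\epsilon$-Monge--Amp\`ere source; on the other hand, the $d$-continuity of the midpoint map on $\mathcal{E}^2$ rests on uniform basic Monge--Amp\`ere energy estimates and on the continuity of $\eta\wedge(d\eta+dd^c\phi)^n$ under decreasing convergence in $\mathcal{E}^2$, all of which must be assembled in the transverse plurisubharmonic setting. The curvature computation on smooth $\epsilon$-geodesics is conceptually a direct transposition of the Kähler argument and should not create fundamentally new difficulties.
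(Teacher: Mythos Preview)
Your overall strategy is exactly the one the paper follows: the Calabi--Chen second-variation argument on $\varepsilon$-geodesics to obtain the NPC inequality on $\mathcal{H}$, followed by a density argument in $\mathcal{E}^2$ using decreasing approximations and the $d$-continuity results of He--Li.

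One technical point deserves sharpening. You write that ``the function $t\mapsto d(\phi_0,\phi_t^\epsilon)^2$ is $\mathcal{C}^2$'' and propose to differentiate it directly. This is not clear: $d$ is defined as an infimum over paths (realised by a weak $\mathcal{C}^{1,\overline{1}}$ geodesic), and there is no a priori reason for $d(\phi_0,\cdot)^2$ to be twice differentiable on $\mathcal{H}$. The paper avoids this by introducing a \emph{second} regularisation parameter: one takes the $\varepsilon'$-geodesic $s\mapsto\phi_1(\cdot,s)$ from $q$ to $r$, then for each $s$ the $\varepsilon$-geodesic from $p$ to $\phi_1(\cdot,s)$, obtaining a genuinely smooth two-parameter family $\phi(\cdot,t,s,\varepsilon)$ from Proposition~\ref{S.prop:epsGeoJacobi}. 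What is differentiated twice in $s$ is the \emph{energy} $E(s)=\int_0^1\langle\partial_t\phi,\partial_t\phi\rangle_\phi\,dt$ of this smooth family, not $d^2$ itself; only after the second-variation inequality is established does one send $\varepsilon\to 0$ (Proposition~\ref{S.Prop:DistanceLimiteEpsilonGeodesics}) and then $\varepsilon'\to 0$ (Proposition~\ref{S.Prop:DistanceContinue}) to recover distances. Your mention of Jacobi fields suggests you have this in mind, but the two-parameter device should be made explicit. The extension to $\mathcal{E}^2$ is as you describe.
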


The statement above is the analogue of what was already known in the Kähler case. Its proof does not contain any original idea. Nevertheless, there was a technical work that had to be done in order to ensure that the usual analogy between the Kähler and the Sasaki setting still holds for the CAT(0) property. Note that not all the results established in the Kähler case can be straightforwardly adapted to the Sasaki case. For example, defining a notion of $K$-stability on Sasaki manifolds involved works by Ross-Thomas \cite{ross2007study} and Collins-Székelyhidi \cite{collins2018k}.

We organise this note as follows: we first recall, in Section \ref{Section:SasakianGeometry}, some definitions about Sasaki manifolds in order to fix notations. Then we define the set of Sasaki potentials $\mathcal{H}(M, \xi, d\eta)$ and give a geometrical interpretation of this space (Proposition \ref{Prop:HdescribesSasakiStructures}). Section \ref{Section:GeometrySmooth} is devoted to introduce the analogue of the Mabuchi metric on $\mathcal{H}(M, \xi, d\eta)$. In Section \ref{Section:GeodesicEquation} we give equivalent formulations for the geodesic equation allowing to weaken the notion of geodesics. Finally, in Section \ref{Section:GeometryMetricCompletion} we prove Theorem \ref{Th:MainTHIntro}.

\section*{Acknowledgements}
\footnotesize
\addcontentsline{toc}{section}{Acknowledgements}
Je tiens à remercier tout particulièrement mes deux encadrants, Eleonora et Gilles. Ils ont su me guider et m'accompagner sans relâche tout au long de ce travail. Leurs conseils avisés m'ont été d'une aide incroyable dans ma compréhension. Merci aussi pour le tout le temps que vous m'avez consacré, pour tout ce que j'apprends avec vous et pour tous les bons moments passés à vos côtés. Mes remerciements également à L. Lempert pour avoir pris le temps de m'écouter et pour ses questions. Elles m'éclairent et me donnent des pistes très pertinentes pour la suite.

Je tiens aussi évidemment à remercier ma famille et mes proches qui sont d'un soutien inébranlable depuis toujours.

I thank the anonymous referee for their valuable advice and the rewarding discussion we had.
\normalsize

\section{Sasaki Geometry and Smooth Potentials}
\label{Section:SasakianGeometry}

This section starts with some preliminaries in Sasaki Geometry: we fix notations and then define the space of Sasaki potentials. We refer to \cite{boyer_sasakian_2007} for an extensive study of Sasaki manifolds.

\subsection{Sasaki Manifolds}
\label{Sec:SasakianManifolds}
We consider $(M,\xi, \eta, \Phi, g)$ a compact real smooth manifold of dimension $2n+1$, where $(M,\xi,\eta)$ is a contact manifold (i.e. $\eta$ is a contact form and $\xi$ the Reeb vector field: $\eta(\xi) = 1$ and $\iota_\xi d\eta = 0$), $g$ is a Riemannian metric and $\Phi$ a $(1,1)$-tensor field with the following compatibility conditions:
$$
\Phi\circ\Phi = -\mathds{1}_{TM} + \xi \otimes \eta \; ; \qquad g\circ(\Phi\otimes \mathds{1}_{TM}) = \frac{1}{2}d\eta \; ; \qquad g\circ(\Phi\otimes\Phi) = g - \eta\otimes\eta.
$$
Note that $\eta\circ\Phi = 0$ ; $\Phi(\xi) =0$ and $g$ is completely determined by $\eta$ and $\Phi$:
\begin{equation}
\label{S.eq:MetriqueRiemannienneEnFonctionDuReste}
g = \eta\otimes\eta +\frac{1}{2}d\eta\circ(\mathds{1}_{TM}\otimes\Phi).
\end{equation}

A \emph{Sasaki Manifold} is such a manifold with an additional integrability condition. The purpose of the next section is to formulate this condition on the symplectization of $M$.

\subsubsection{Metric Cone}
\label{sec:MetricConeSasakianManifolds}
Given such a manifold $(M,\xi, \eta,\Phi, g)$, one can construct a metric cone $C(M, \eta)$ (called symplectization), also denoted $C(M)$ if there is no ambiguity (see \cite[Appendix 4 - E]{arnoldMathematical}):
\begin{align*}
C(M,\eta) &:= \left\lbrace \alpha\in T^{\star}_x M , x\in M \vert \ker\alpha = \ker\eta_x \, , \; \alpha \text{ and } \eta_x \text{ defining the same orientation} \right\rbrace
\end{align*}
This set is furnished with a symplectic structure which is basically the restriction of $d\tau$ to $C(M,\eta)\subset T^\star M$ where $\tau$ is the canonical one-form on the cotangent bundle. We have a canonical identification of $C(M,\eta)$ with $M\times \R_+^{\star}$:
\begin{align*}
C(M,\eta) &\longrightarrow M\times \R_+^{\star} \\
\alpha \in T_x^\star M &\longmapsto (x, \sqrt{\alpha(\xi_x)}) =: (x,r).
\end{align*}
In $C(M,\eta)$, one has the so called \emph{canonical identification} $M\simeq \left\lbrace r = 1 \right\rbrace \subset C(M)$. We have a projection map:
$$
\pi_r : C(M,\eta) \rightarrow \left\lbrace r = 1 \right\rbrace.
$$
From now on, we consider $M$ as being $\left\lbrace r = 1 \right\rbrace$, and assume that $M$ is furnished with $(\xi, \eta, \Phi, g)$ as in Section \ref{Sec:SasakianManifolds}. Let $g_C := dr^2 + r^2 (\pi_r^{\star}g)$ be a metric on $C(M,\eta)$. For this metric, we let $\psi$ be the gradient of $\frac{r^2}{2}$ and we extend the Reeb vector field: $\overline{\xi} = (\xi, 0)$. Using these two vector fields and the canonical identification, we define an almost complex structure on $C(M,\eta) \simeq M\times\R_+^{\star}$:
$$
\begin{cases}
I\psi = \overline{\xi}\\
I(Y,0) = (\Phi(Y),0) -\eta(Y)\psi, \text{ where } Y \text{ is a tangent vector to } M.
\end{cases}
$$
If the almost complex structure $I$ on $C(M)$ is integrable, then we call $(\xi, \eta, \Phi, g)$ a \emph{Sasaki} structure. We say that $M$ is a \emph{Sasaki manifold} if it can be endowed with a Sasaki structure. In particular, given a Sasaki manifold, the almost complex structure defined above is upgraded to a Kähler structure. The next proposition outlines this Kähler structure:

\begin{Prop}[{\cite[Section 6.5]{boyer_sasakian_2007}}]
\label{S.Prop:SasakianCone}
Let $M$ be a Sasaki manifold. Set $\overline{\eta} := \pi_r^\star \eta$ and $\omega_C := \frac{1}{2}d(r^2 \overline{\eta})$. Then, $(C(M), g_C, \omega_C, I)$ is a K\"ahler manifold. Moreover, $\overline{\eta} = 2d^c\log(r) = \frac{2}{r}d^c r$ and $\omega_C = dd^c\left(\frac{r^2}{2}\right)$.
\end{Prop}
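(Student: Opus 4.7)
The plan is to establish the three ingredients of a Kähler structure on $C(M)$: compatibility of $g_C$ with $I$, closedness of $\omega_C$, and integrability of $I$. The last one is the defining hypothesis of a Sasaki structure, so there is nothing to prove. Closedness of $\omega_C$ is immediate from the definition $\omega_C=\tfrac12 d(r^2\overline\eta)$, but the stronger statement that $\omega_C=dd^c(r^2/2)$ will require first proving the potential-theoretic identities $\overline\eta=2d^c\log r=\tfrac{2}{r}d^c r$. So I would first derive those two identities, then read off $\omega_C=dd^c(r^2/2)$ from $d^c(r^2/2)=\tfrac{r^2}{2}\overline\eta$, and only then turn to the Hermitian compatibility.

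For the identities, I would use the real formula $d^c f(X)=-\tfrac12 df(IX)$, valid on any almost complex manifold for real $f$. Write a general tangent vector on $C(M)\simeq M\times\R^*_+$ as $X=\alpha\partial_r+(Y,0)$ with $Y\in TM$. Since $\psi=r\partial_r$ is the $g_C$-gradient of $r^2/2$, the definition of $I$ gives $I\partial_r=\overline\xi/r$ and $I(Y,0)=(\Phi Y,0)-r\eta(Y)\partial_r$. Consequently $dr(IX)=-r\eta(Y)$, which immediately yields $d^c\log r(X)=\tfrac12\eta(Y)=\tfrac12\overline\eta(X)$ and $d^c r(X)=\tfrac{r}{2}\overline\eta(X)$. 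Applying the same formula once more with $f=r^2/2$ gives $d^c(r^2/2)=\tfrac{r^2}{2}\overline\eta$, and differentiating recovers $\omega_C=dd^c(r^2/2)$.

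For the Hermitian compatibility, I would verify $g_C(IX,IY)=g_C(X,Y)$ and $\omega_C(X,Y)=g_C(IX,Y)$ by splitting vectors along the three natural pieces: the radial direction $\partial_r$, the Reeb direction $\overline\xi$, and the horizontal distribution $\ker\eta\subset TM$. On the two-dimensional subspace spanned by $\partial_r,\overline\xi$ the identities reduce to the relation $I\partial_r=\overline\xi/r$ and a direct computation using $g_C=dr^2+r^2g$ together with $g(\xi,\xi)=1$ (a consequence of $g=\eta\otimes\eta+\tfrac12 d\eta(\cdot,\Phi\cdot)$ and $\eta(\xi)=1$, $\Phi(\xi)=0$). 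On the horizontal part, $I$ restricts to $\Phi$ and the identities reduce to $g(\Phi Y,\Phi Z)=g(Y,Z)$ and $\tfrac12 d\eta(Y,Z)=g(\Phi Y,Z)$, both of which are part of the compatibility conditions of Section~\ref{Sec:SasakianManifolds}. The cross terms between horizontal and Reeb/radial directions vanish by $\eta\circ\Phi=0$ and $\Phi(\xi)=0$.

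The main obstacle I anticipate is purely bookkeeping: keeping track of the rescaling factors of $r$ that appear because $g_C=dr^2+r^2g$ uses the rescaled metric on the $M$-slice while $I$ is defined via $\psi=r\partial_r$ and $\overline\xi$ (unrescaled). No substantial geometric difficulty arises, since the compatibility of $(\xi,\eta,\Phi,g)$ on $M$ is designed precisely so that the cone carries a Hermitian form with potential $r^2/2$; integrability of $I$ is the only nontrivial hypothesis and is built into the definition of a Sasaki manifold.
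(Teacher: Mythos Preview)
Your argument is correct. The paper itself does not supply a proof of this proposition: it simply records the statement and cites \cite[Section~6.5]{boyer_sasakian_2007}, so there is nothing to compare against. Your outline fills in the standard verification, and the key computations are right: with the convention $d^c f=-\tfrac12\,df\circ I$ one indeed gets $dr(IX)=-r\,\eta(Y)$ from $I\partial_r=\overline\xi/r$ and $I(Y,0)=(\Phi Y,0)-r\,\eta(Y)\partial_r$, whence $\overline\eta=2d^c\log r=\tfrac{2}{r}d^c r$ and $\omega_C=dd^c(r^2/2)$; the Hermitian compatibility then reduces, via the radial/Reeb/horizontal splitting, to the compatibility relations on $M$ listed in Section~\ref{Sec:SasakianManifolds}, and integrability of $I$ is the Sasaki hypothesis.
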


\subsubsection{Kähler Cone}
The complex structure, defined in Section \ref{sec:MetricConeSasakianManifolds}, on the symplectization of a Sasaki manifold is actually a Kähler structure. Here, we first define what we call a \emph{Kähler cone metric} and then state a correspondence between these special Kähler metrics and Sasaki structures.

\begin{Def}
\label{DefKahlerConeMetric}
Given a complex manifold $(C, I)$, a \emph{Kähler cone metric} on $(C,I)$ is a $(1,1)$-form of the form $dd^c\left(\frac{r^2}{2}\right)$ where $r:C\rightarrow \R_+^{\star}$ is a positive function such that $\left\lbrace r = 1 \right\rbrace$ is compact and such that:
\begin{enumerate}
\item $dd^c\left(\frac{r^2}{2}\right)$ is Kähler,
\item The radial vector field $\psi := \nabla(\frac{r^2}{2})$ is holomorphic with respect to $I$ (i.e. $\Lie_\psi I = 0$),
\item\label{DefKahlerConeMetric3} $g_C(\psi,\psi) = r^2$.
\end{enumerate}
Here, $g_C$ stands for the Riemannian metric associated to $dd^c\left(\frac{r^2}{2}\right)$ and $\nabla$ stands for the gradient according to $g_C$. We say that such a $C$ is a \emph{Kähler cone}.
\end{Def}

Proposition \ref{S.Prop:SasakianCone} says that given a Sasaki manifold, we have a Kähler cone metric on its symplectization $C(M,\eta)$. On the other hand, a Kähler cone metric induces a Sasaki structure on $M = \left\lbrace r = 1 \right\rbrace$. Indeed, the flow of $\psi$ gives a projection $\pi : C \rightarrow \left\lbrace r = 1 \right\rbrace$ and a decomposition of $C$ as a Riemannian cone in the sense of \cite[Definition 6.5.1]{boyer_sasakian_2007}: $C \simeq \left\lbrace r = 1 \right\rbrace \times \R_+^\star$ with the metric $dr^2 + r^2 \pi^\star (g_{\vert M})$. We set $\xi := \pi_\star(I\psi)$, $\eta = 2d^c\log(r)_{\vert \left\lbrace r = 1 \right\rbrace}$ and define $\Phi$ as being the restriction of $I$ to $\ker \eta$ and $\Phi(\xi) = 0$. It is straightforward to check that $(M, \xi, \eta, \Phi, g_{\vert M})$ is a Sasaki manifold. We summarize this discussion:

\begin{Prop}
\label{Prop:OneToOneKahlerConeAndSasaki}
There is a one-to-one correspondence between compact Sasaki manifolds and Kähler cones.
\end{Prop}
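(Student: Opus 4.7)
The forward direction (Sasaki $\mapsto$ Kähler cone) is already established by Proposition \ref{S.Prop:SasakianCone}: the symplectization $C(M,\eta)$ of a Sasaki manifold carries the Kähler potential $r^2/2$, satisfying conditions (1)--(3) of Definition \ref{DefKahlerConeMetric}. The reverse direction is sketched in the paragraph preceding the statement. My plan is first to verify in detail that the reverse construction yields a genuine Sasaki structure on $M = \{r = 1\}$, and then to check that the two assignments are mutual inverses.

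For the first task, starting from a Kähler cone $(C, I, r)$ with radial vector field $\psi = \nabla(r^2/2)$, define $\xi := \pi_\star(I\psi)_{|M}$, $\eta := 2 d^c \log(r)_{|M}$, and $\Phi := I_{|\ker\eta}$ extended by $\Phi(\xi) = 0$. I would first verify the contact axioms: $\eta(\xi) = 1$ follows from condition (3) combined with the identity $\overline{\eta} = \tfrac{2}{r} d^c r$ on $C$, while $\iota_\xi d\eta = 0$ uses that $I\psi$ is Killing, which is a consequence of $\psi$ being a holomorphic gradient of a Kähler potential. The compatibility $\Phi\circ\Phi = -\mathds{1}_{TM} + \xi \otimes \eta$ is immediate from $I^2 = -\mathds{1}_{TC}$ once $TM$ is split as $\R\xi \oplus \ker\eta$. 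The identities $g\circ(\Phi\otimes\mathds{1}_{TM}) = \tfrac{1}{2}d\eta$ and $g\circ(\Phi\otimes\Phi) = g - \eta\otimes\eta$ follow by restricting the Kähler identities on $C$ to the slice $\{r = 1\}$ and invoking \eqref{S.eq:MetriqueRiemannienneEnFonctionDuReste}. Integrability of the transverse complex structure $\Phi$ is inherited from the integrability of $I$ on $C$.

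For the second task, the composition Sasaki $\to$ cone $\to$ Sasaki returns the original data essentially by construction: $C(M,\eta)$ is canonically identified with $M \times \R^\star_+$, the slice $\{r = 1\}$ is $M$, and by Proposition \ref{S.Prop:SasakianCone} one has $I\psi = \overline{\xi}$ and $\overline{\eta} = 2 d^c \log(r)$, so restricting back to $\{r = 1\}$ recovers the quadruple $(\xi, \eta, \Phi, g)$. The composition cone $\to$ Sasaki $\to$ cone is also the identity, because the Kähler cone structure is intrinsically recoverable from $(M, \xi, \eta, \Phi, g)$: the function $r$ is pinned down by $g_C(\psi, \psi) = r^2$ together with the choice of $M$ as its unit level set, while the symplectization $C(M, \eta)$ with its canonical coordinate coincides with $C$ via the flow of $\psi$, under which the ambient $I$ matches the complex structure reconstructed piecewise from $(\xi, \eta, \Phi)$.

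The main obstacle I anticipate is not conceptual but bookkeeping: one must carefully reconcile two \emph{a priori} different parametrisations of the cone, namely the intrinsic $C(M,\eta) \subset T^\star M$ equipped with its canonical coordinate $r$, versus the given cone $C$ with its prescribed $r$. Showing these agree amounts to verifying that the flow of $\psi$ on $C$ provides the identification $C \simeq M \times \R^\star_+$ both isometrically and holomorphically, and that the complex structure reconstructed piecewise via $I\psi = \overline{\xi}$ and $I_{|\ker\overline\eta} = \Phi$ coincides with the ambient $I$ on a spanning set $\{\psi, \overline\xi\} \cup \ker\overline\eta$. Once these identifications are pinned down, every remaining axiom is a routine consequence of the defining properties on each side.
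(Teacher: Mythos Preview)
Your proposal is correct and follows essentially the same approach as the paper: the paper's entire argument is the short discussion paragraph preceding the proposition (forward direction via Proposition~\ref{S.Prop:SasakianCone}, reverse direction by defining $\xi, \eta, \Phi$ from $(C,I,r)$ and declaring the verification ``straightforward''), and you have simply expanded on those verifications and added the check that the two constructions are mutual inverses, which the paper leaves implicit.
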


\subsection{Basic Forms and Potentials}
\label{section:BasicForms}
Here, we recall the definition of basic functions on Sasaki manifolds in order to provide a nice description of Kähler cone metrics in terms of basic functions (Proposition \ref{Prop:HdescribesSasakiStructures}).
In the sequel, $M$ is a compact Sasaki manifold and we use the notations introduced in Section \ref{Sec:SasakianManifolds}.

\begin{Def}
We say that a $p$-form $\alpha$ on $M$ is \emph{basic} if $\iota_\xi \alpha = 0$ and $\Lie_\xi \alpha = 0$. 
\end{Def}

In the case of $0$-forms we set:
$
\mathcal{C}^{\infty}_B(M) := \left\lbrace \phi \in \mathcal{C}^{\infty}(M), \Lie_{\xi} \phi =0 \right\rbrace.
$

Following \cite[Section 7]{boyer_sasakian_2007}, one can define basic operators $d_B, \partial_B, \overline{\partial}_B$ and their associated cohomologies. In this context, a $\partial_B \overline{\partial}_B$-lemma holds:
\begin{Lemma}[{\cite[Lemma 7.5.6]{boyer_sasakian_2007}}]
\label{lemma:partialPartialBarBasic}
Let $M$ be a compact Sasaki manifold. Let $\omega$ and $\omega'$ be closed, basic cohomologous $(1,1)$-forms. Then there exists a function $\phi \in \mathcal{C}^{\infty}_B(M)$ such that
$
\omega = \omega' +i\partial_B\overline{\partial_B}\phi.
$
\end{Lemma}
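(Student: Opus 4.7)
The plan is to reduce the statement to the classical K\"ahler $\partial\overline{\partial}$-lemma applied transversally to the Reeb foliation. The Reeb foliation $\mathcal{F}_\xi$ on $M$ is transversally K\"ahler, with transverse K\"ahler form $\frac{1}{2}d\eta$ and transverse complex structure $\Phi$; basic $(p,q)$-forms inherit this bigrading, and the operators $\partial_B$ and $\overline{\partial}_B$ restrict in each foliated chart to the usual $\partial$ and $\overline{\partial}$ on the local leaf space.

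First, since $\omega - \omega'$ is closed, basic and basic-cohomologous to zero, there exists a real basic $1$-form $\alpha$ with $d_B \alpha = \omega - \omega'$. I would decompose $\alpha = \alpha^{1,0} + \alpha^{0,1}$; reality forces $\alpha^{1,0} = \overline{\alpha^{0,1}}$. Comparing bidegrees in the identity $d_B \alpha = \omega - \omega'$ then yields $\partial_B \alpha^{1,0} = 0$, $\overline{\partial}_B \alpha^{0,1} = 0$, together with $\partial_B \alpha^{0,1} + \overline{\partial}_B \alpha^{1,0} = \omega - \omega'$.

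Next I would invoke the transverse Hodge theory for transversally K\"ahler foliations (due to El Kacimi-Alaoui, compare with \cite[Section 7]{boyer_sasakian_2007}): basic forms admit an orthogonal Hodge decomposition and, because the transverse K\"ahler identities hold, the basic Laplacians $\Delta_{d_B}$, $\Delta_{\partial_B}$, $\Delta_{\overline{\partial}_B}$ coincide up to the standard factor of $2$. Hence I decompose $\alpha^{0,1} = h + \overline{\partial}_B u$ with $h$ a basic $\overline{\partial}_B$-harmonic form and $u \in \mathcal{C}^{\infty}_B(M;\C)$. The crucial observation is that $\partial_B h$ is simultaneously $d_B$-harmonic (coincidence of Laplacians) and $d_B$-exact (since $\overline{\partial}_B h = 0$ forces $d_B h = \partial_B h$), hence $\partial_B h = 0$; complex conjugation handles $\alpha^{1,0}$ in the same way. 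Plugging back gives
$$\omega - \omega' = \partial_B \overline{\partial}_B u - \partial_B \overline{\partial}_B \overline{u} = \partial_B \overline{\partial}_B(u - \overline{u}) = i\,\partial_B \overline{\partial}_B(2\,\Ima u),$$
so that $\phi := 2\,\Ima u \in \mathcal{C}^{\infty}_B(M)$ is the desired real basic potential.

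The main obstacle is not conceptual but logistical: everything rests on the availability of a transverse Hodge theory in the basic de Rham complex (finite-dimensionality of basic harmonic spaces, orthogonal Hodge decomposition, and transverse K\"ahler identities). Once this machinery is imported as a black box from the theory of Riemannian foliations specialised to the Sasaki setting, the argument becomes a verbatim transcription of the classical compact K\"ahler $\partial\overline{\partial}$-lemma.
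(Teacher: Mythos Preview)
Your argument is correct and is precisely the standard route: reduce to the classical $\partial\overline{\partial}$-lemma via El Kacimi-Alaoui's transverse Hodge theory for the transversally K\"ahler Reeb foliation. The paper itself does not give a proof of this lemma; it simply quotes it from \cite[Lemma 7.5.6]{boyer_sasakian_2007} and refers the reader there, so there is nothing further to compare.
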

We refer to \cite{boyer_sasakian_2007} for a proof and for many other properties of these operators. As in the K\"ahler case we define the $d_B^c$ operator (a real operator):
$
d_B^c = \frac{i}{2}(\overline{\partial_B} - \partial_B)
$
so that $d_Bd_B^c = i\partial_B\overline{\partial_B}$. Basic operators coincide with the usual ones on basic forms so we will often omit the subscript $B$.

We say that two Sasaki structures with the same Reeb vector field $(\xi, \eta, \Phi, g)$ and $(\xi, \eta', \Phi', g')$ on $M$ have \emph{the same transverse structure} if the following diagram commutes \cite[Section 7.5.1]{boyer_sasakian_2007}.
$$
  \xymatrix{
    TM \ar[r]^-p \ar[d]_\Phi & TM/L_{\xi} \ar[d]^J & TM \ar[l]_-p \ar[d]^{\Phi'}\\
    TM \ar[r]_-{p} 	        & TM/L_{\xi}          & \ar[l]^-p  TM
  }
$$
Here, $p$ is the natural projection, $J$ is the map induced by $\Phi$ (defined by the right hand side of the diagram) and $L_{\xi}$ is the line bundle generated by $\xi$.
Let's now compare two Sasaki structures $(\xi, \eta, \Phi, g)$ and $(\xi', \eta', \Phi', g')$ on $M$ with the same Reeb vector field: $\xi = \xi'$ and having the same transverse structure. Note that this last condition is fundamental because we want to identify the basic $(1,1)$-forms in cohomology. Since $\eta$ and $\eta'$ have the same Reeb vector field, the 1-form $\eta - \eta'$ is basic. Thus $d\eta - d\eta'$ is an exact basic form. Lemma \ref{lemma:partialPartialBarBasic} gives a basic function $\phi$ such that
$
d(\eta' - \eta) = dd^c\phi.
$
Since $d\eta$ and $d\eta'$ are both real, $\phi$ is a smooth real function. This motivates the definition of the following set of the so called \emph{Sasaki potentials}:
$$
\mathcal{H}(M,\xi,d\eta) = \left\lbrace \phi \in \mathcal{C}^{\infty}_B(M), d\eta_\phi = d(\eta + d^c\phi) > 0 \right\rbrace.
$$
In the sequel, when there is no ambiguity, we will write $\mathcal{H}$ for the space of Sasaki potentials $\mathcal{H}(M,\xi,d\eta)$.

\begin{Example}
\label{Ex:PotentialsOnSphere}
Consider the standard Sasaki structure on $\mathbb{S}^3$. Let $\overline{\phi}\in\mathcal{H}$ be a smooth Sasaki potential. Since it is basic, one can find $\phi\in\mathcal{C}^{\infty}(\mathbb{CP}^{1})$ such that $\phi\circ H = \overline{\phi}$, where $H$ stands for the Hopf fibration. Indeed, the orbits of the Reeb vector field are given by the Hopf fibration. Since $d\eta_{\overline{\phi}} > 0$, one has:
$
H^{\star}(2\omega_{FS} + dd^c\phi) = d\eta + dd^c(H^{\star}\phi) = d\eta_{\overline{\phi}} > 0.
$
Thus, $\phi\in\mathcal{H}(\mathbb{CP}^1,2\omega_{FS})$ (see \eqref{eq:SpaceOfKahlerPotentials}). 
\end{Example}

\begin{Remark}
\label{S.Rk:VolumeFormSimplification}
Using the fact that the $(2n+1)$'th basic cohomology group is trivial on a $(2n+1)$-dimensional Sasaki manifold \cite[Proposition 7.2.3]{boyer_sasakian_2007}, one gets:
$$
\eta_\phi \wedge (d\eta_\phi)^n = \eta\wedge(d\eta_\phi)^n,
$$
since $d^c_B \phi$ is basic and so is $d\eta_\phi$.
\end{Remark}

We note that
$$d\eta_\phi > 0 \iff \eta_\phi \wedge d\eta_\phi ^n \neq 0.$$
Indeed, take a minimizing point $p$ for $\phi$. At $p$, since $d\eta > 0$, we have $d\eta_{\phi}\vert_p > 0$.
If $\eta_\phi \wedge d\eta_\phi ^n \neq 0$, then by continuity $d\eta_\phi > 0$ everywhere on $M$. On the other hand, if $d\eta_\phi > 0$, one can define a function $g$ such that: $\eta_\phi\wedge d\eta_\phi^n = \eta\wedge d\eta_\phi^n =: g(\eta\wedge d\eta^n)$. Suppose that $g(p) =0$ where $p\in M$. Then, on $\ker \eta_{\vert p}$ the $2$-form $d\eta_\phi$ is degenerate, indeed, $(d\eta_\phi^n)_{\vert p} = \iota_{\xi}(\eta\wedge d\eta_\phi^n)_{\vert p} = 0$. This is a contradiction with the positivity of the transverse Kähler form $d\eta_{\phi}$.

\begin{Prop}
\label{Prop:HdescribesSasakiStructures}
Let $(M, \xi, \eta, \Phi, g)$ be a compact Sasaki manifold. We fix the induced complex structure $I$ on the cone $C(M,\eta)$ so that $(C(M,\eta), I)$ is a complex manifold. We let $r$ be the function defined in Section \ref{sec:MetricConeSasakianManifolds} inducing a canonical identification of $M$ in $C(M,\eta)$. Then, there is a one-to-one correspondence between the space of Sasaki potentials, $\mathcal{H}$, and the set of Kähler cone metrics on the complex manifold $(C(M,\eta), I)$ with fixed radial vector field (i.e. if $r$ and $\tilde{r}$ are as in Definition \ref{DefKahlerConeMetric} then we ask $\psi = \tilde{\psi}$).
\end{Prop}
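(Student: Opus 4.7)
I would construct the bijection explicitly and check mutual inversion. Given $\phi \in \mathcal{H}$, extend it to a $\psi$-invariant smooth function $\tilde\phi$ on $C(M,\eta)$ by setting $\tilde\phi(x,r) := \phi(x)$ under the identification $C(M,\eta) \simeq M \times \R_+^\star$, and define the candidate radius function
\[
\tilde r := r\, e^{\tilde\phi/2}.
\]
The key computation is
\[
2\, d^c \log \tilde r = 2\, d^c \log r + d^c \tilde\phi,
\]
so that, restricting to the link $\{\tilde r = 1\}$ and pulling back to $M$ along the flow of $\psi$, one recovers $\eta + d^c\phi = \eta_\phi$. Consequently the transverse Kähler form of the induced Sasaki structure is $d\eta_\phi > 0$, which, combined with the $\psi$-homogeneity of $\tilde r$ and the already $I$-holomorphic character of $\psi$, verifies the three conditions of Definition \ref{DefKahlerConeMetric} for the new radius $\tilde r$, with radial vector field still equal to $\psi$.

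Conversely, given a Kähler cone metric $\tilde\omega = dd^c(\tilde r^2/2)$ on $(C(M,\eta), I)$ whose radial vector field equals $\psi$, the identity $\psi = \nabla_{\tilde g}(\tilde r^2/2)$ together with condition \ref{DefKahlerConeMetric3} forces $\psi(\tilde r) = \tilde r$. Hence $\log(\tilde r / r)$ is $\psi$-invariant and descends through the radial projection $\pi : C(M,\eta) \to M$ to a smooth function on $M$. Setting
\[
\phi := 2\log(\tilde r / r)\big|_{M},
\]
and reversing the above computation gives $\eta_\phi = \eta + d^c\phi$ as the contact form of the new Sasaki structure on $M$; the Kähler positivity of $\tilde\omega$ translates into $d\eta_\phi > 0$, hence $\phi \in \mathcal{H}$. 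The maps $\phi \mapsto r e^{\phi/2}$ and $\tilde r \mapsto 2\log(\tilde r/r)|_M$ are manifestly mutually inverse.

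\textbf{Main obstacle.} The delicate part is not the algebra itself but the compatibility checks. Specifically: (i) one must verify that the radial vector field associated to the new Kähler cone metric truly \emph{equals} $\psi$, rather than merely sharing its orbits; this is where condition \ref{DefKahlerConeMetric3} of Definition \ref{DefKahlerConeMetric} and the $I$-holomorphicity of $\psi$ are essential. (ii) one must show that the function $\phi$ produced by the backward map is basic, i.e.\ additionally invariant under the Reeb flow. The point is that the Reeb field of the new Sasaki structure equals $\pi_\star(I\psi) = \xi$ precisely because the radial vector field is unchanged, so the hypothesis ``fixed radial vector field'' is exactly what matches $\mathcal{H}$ with Kähler cone metrics sharing the reference Reeb vector field.
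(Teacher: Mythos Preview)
Your proposal is correct and follows essentially the same route as the paper: both directions use the maps $\phi \mapsto re^{\phi/2}$ and $\tilde r \mapsto 2\log(\tilde r/r)$, and the verification of basicness/positivity proceeds along the same lines. The only cosmetic difference is that where you descend via the radial projection $\pi$, the paper instead writes down the explicit diffeomorphism $G : \{\tilde r = 1\} \to \{r = 1\}$, $(x,r)\mapsto (x, re^{u/2})$, and checks $G_\star \tilde\eta = \eta + d^c u$ directly; this is the same identification phrased slightly differently.
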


\begin{proof}
Given a function $u\in\mathcal{H}$, we set $\tilde{r} := r e^{\frac{u}{2}}$. It is straightforward to check that $\tilde{r}$ induces a Kähler cone metric. On the other hand, fixing the complex structure $I$, the condition $\psi = \tilde{\psi}$ implies that $u := 2\log\left(\frac{\tilde{r}}{r} \right)$ is a basic function on $M$. We now have to show that $u$ lies in $\mathcal{H}$. We let $G : \left\lbrace \tilde{r} = 1 \right\rbrace \rightarrow \left\lbrace r = 1 \right\rbrace$ be the restriction of the map $C(M,\eta) \ni (x,r) \mapsto (x,r e^{\frac{u}{2}}) \in C(M,\eta)$. The map $G$ allows us to identify two copies of $M$ in its cone. A direct computation shows $G_\star \tilde{\eta} = \eta + d^c u$, where $\tilde{\eta} = 2d^c \log(\tilde{r})$ is the contact form on $\left\lbrace \tilde{r} = 1 \right\rbrace$. This shows that $\eta + d^c u$ is a contact form coming from a Sasaki structure and thus $u\in\mathcal{H}$.
\end{proof}

\begin{Remark}
This has been proved by V. Apostolov, D. Calderbank and E. Legendre in \cite{apostolov2021weighted} in a more stylish way. Our Definition \ref{DefKahlerConeMetric} of Kähler cone metrics is equivalent to \cite[Definition 2.2]{apostolov2021weighted} and Proposition \ref{Prop:HdescribesSasakiStructures} above becomes equivalent to \cite[Lemma 2.8]{apostolov2021weighted}. In \cite{apostolov2021weighted}, the Sasaki manifold $M$ is seen as the quotient of the cone by the action of $\psi$ and they do the proper identifications between the copies of $M$ in its cone \cite[Lemma 2.4]{apostolov2021weighted} beforehand. In our pedestrian proof, we work on a preferred copy of $M$, namely $\left\lbrace r = 1 \right\rbrace$ and we properly identify it with $\left\lbrace \tilde{r} = 1 \right\rbrace$ meanwhile proving the one-to-one correspondence.
\end{Remark}

In particular, $u\in\mathcal{H}$ induces a new Sasaki structure $(\xi, \eta_u, \Phi_u, g_u)$ on $M$ with same Reeb vector field. It is completely determined by $u$ using the correspondences given in Proposition \ref{Prop:OneToOneKahlerConeAndSasaki} and Proposition \ref{Prop:HdescribesSasakiStructures}:
$$
\eta_u := \eta + d^c u \quad ; \quad \Phi_u := \Phi - \xi\otimes (d^c u \circ \Phi).
$$
The Riemannian metric $g_u$ is then determined by \eqref{S.eq:MetriqueRiemannienneEnFonctionDuReste}. The new Sasaki structure $(\xi,\eta_u, \Phi_u, g_u)$ has the same transverse complex structure.

Observe that given a compact Sasaki manifold $M$, Sasaki structures induced by functions in $\mathcal{H}$ have same volume (see \cite[Proposition 7.5.10]{boyer_sasakian_2007}):
$$
\int_M \eta\wedge d\eta^n = \int_M \eta_u \wedge d\eta_u^n.
$$
This plays an important role when normalizing the Monge-Ampère measure.

\section{The Geometry of Smooth Potentials}
\label{Section:GeometrySmooth}
In this section, following the work of P. Guan and X. Zhang \cite{guan_geodesic_nodate, guan_regularity_2009}, we present some results about the geometry of $\mathcal{H}$ and its geodesics.

Given $\phi\in\mathcal{H}$, we introduce a $L^2$-metric on the tangent space of $\mathcal{H}$ at $\phi$, for $\psi_1, \psi_2 \in T_{\phi}\mathcal{H} \simeq \mathcal{C}^{\infty}_{B}(M)$, we set:
\begin{equation}
\label{eq:DefMabuchiMetricAnalogue}
\scal{\psi_1}{\psi_2}_\phi = \int_M (\psi_1 \psi_2) \eta_\phi\wedge d\eta_\phi^n= \int_M (\psi_1 \psi_2) \eta\wedge d\eta_\phi^n.
\end{equation}

\begin{Example}
One can compute the metric on the space of Sasaki potentials for the sphere. For any two $\overline{f}, \overline{g}\in \mathcal{C}_B^{\infty}$, we note $f,g\in\mathcal{C}^{\infty}(\mathbb{CP}^1)$ such that $\overline{f} = f\circ H$ and $\overline{g} = g\circ H$ (see Example \ref{Ex:PotentialsOnSphere}). Denoting $\scal{\cdot}{\cdot}_{\mathbb{S}^3}$ the Riemannian metric on the space of Sasaki potentials of $\mathbb{S}^3$ and $\scal{\cdot}{\cdot}_{\mathbb{CP}^1}$ the one on the space of K\"ahler potentials on $(\mathbb{CP}^1, 2\omega_{FS})$ (see \eqref{eq:MabuchiMetric}) one has:
$$
\scal{\overline{f}}{\overline{g}}_{\mathbb{S}^3,\overline{\phi}}= 2\pi\scal{f}{g}_{\mathbb{CP}^1,\phi}.
$$
Indeed, the integrals of $\eta$ along each orbit of $\xi$ are equal to $2\pi$.
\end{Example}

Let $t\in[0,1]\mapsto\phi_t$ be a smooth path in $\mathcal{H}$ and $\psi_1, \psi_2 \in\mathcal{C}^\infty_B(M\times[0,1])$ tangent to $\phi$. Stokes' theorem gives (see also \cite[Proposition 1]{guan_geodesic_nodate}):
$$
\frac{d}{dt}\scal{\psi_1}{\psi_2}_{\phi} = \scal{\frac{d\psi_1}{dt} - \frac{1}{4}g_{\phi}(\nabla \psi_1, \nabla \dot{\phi})}{\psi_2}_{\phi} + \scal{\psi_1}{\frac{d\psi_2}{dt} - \frac{1}{4}g_{\phi}(\nabla \psi_2, \nabla \dot{\phi})}_{\phi},
$$
where $\nabla$ stands for the gradient associated to $g_\phi$.

\begin{Def}
\label{Def:Connection}
Let $\phi : t \in [0,1] \mapsto \phi(t)\in \mathcal{H}$ be a smooth path and $\psi$ tangent to $\phi$ identified with smooth basic functions on $M\times [0,1]$.
$$
\nabla_{\dot{\phi}}\psi := \dot{\psi} - \frac{1}{4}g_{\phi}(\nabla \psi, \nabla \dot{\phi}),
$$
where $\dot{\psi} = \frac{d\psi}{dt}$.
\end{Def}

We recall \cite[Proposition 2]{guan_geodesic_nodate} that the connection $\nabla$ is compatible with $\scal{.}{.}_{\phi}$ and torsion free. Additionally, and this is crucial for Theorem \ref{S.th:Cat(0)Smooth}, we have:

\begin{Prop}[{\cite[Theorem 1]{guan_geodesic_nodate}}]
The sectional curvature is non-positive.
\end{Prop}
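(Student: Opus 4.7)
The plan is to adapt the classical Mabuchi--Semmes computation of the sectional curvature of the space of Kähler potentials to the Sasaki setting, the Kähler structure being replaced everywhere by the transverse Kähler structure on the contact distribution $\ker\eta_\phi$. The target identity is the explicit formula
$$
\scal{R(\psi_1,\psi_2)\psi_2}{\psi_1}_\phi = -\frac{1}{4}\int_M \{\psi_1,\psi_2\}_\phi^2 \; \eta_\phi\wedge d\eta_\phi^n,
$$
where $\{\cdot,\cdot\}_\phi$ denotes the transverse Poisson bracket on basic functions associated with the transverse symplectic form $\frac{1}{2}d\eta_\phi$. Since the right-hand side is manifestly non-positive, the non-positivity of $K(\psi_1,\psi_2)$ would then follow at once.

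Fix $\phi\in\mathcal{H}$ and $\psi_1,\psi_2\in T_\phi\mathcal{H}\simeq \mathcal{C}^\infty_B(M)$. First I would extend $\psi_1$ and $\psi_2$ to vector fields on $\mathcal{H}$ that are constant as functions on $M$ along all variations of $\phi$, so that $[\psi_1,\psi_2]=0$ and the Riemann tensor reduces to $R(\psi_1,\psi_2)\psi_2=\nabla_{\psi_1}\nabla_{\psi_2}\psi_2-\nabla_{\psi_2}\nabla_{\psi_1}\psi_2$. Using Definition \ref{Def:Connection} together with $\dot{\psi}_j\equiv 0$ for such extensions, the Christoffel symbol reads
$$
\nabla_{\psi_i}\psi_j = -\frac{1}{4}g_\phi(\nabla\psi_i,\nabla\psi_j).
$$
Iterating produces two types of terms: one obtained by differentiating the basic function $g_\phi(\nabla\psi_i,\nabla\psi_j)$ along the deformation $\psi_k$ (which involves the variation of $g_\phi$, and thus of $d\eta_\phi$), and a purely algebraic term of the form $\Gamma(\psi_i,\Gamma(\psi_j,\psi_k))$.

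The next step is to pair with $\psi_1$ using the metric \eqref{eq:DefMabuchiMetricAnalogue} and perform integration by parts with respect to $\partial_B,\overline{\partial}_B$ along the transverse distribution. Here I would rely on Remark \ref{S.Rk:VolumeFormSimplification} to absorb the Reeb-direction contribution: since $\eta_\phi\wedge d\eta_\phi^n = \eta\wedge d\eta_\phi^n$ and every function in sight is basic, the $d_Bd_B^c$ integrations by parts behave formally as in the Kähler case. The algebraic cancellations should then collapse the whole expression into the squared $L^2$ norm of the commutator of the transverse Hamiltonian vector fields of $\psi_1$ and $\psi_2$, which is by definition $\{\psi_1,\psi_2\}_\phi^2$.

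The main obstacle I anticipate is the careful bookkeeping required to check that every integration by parts along $\ker\eta$ produces no residual boundary or Reeb-direction term, and that the transverse Poisson bracket of two basic functions is itself basic (so that its square is integrable against $\eta_\phi\wedge d\eta_\phi^n$). Both points should follow from the Reeb-invariance of basic forms and from the triviality of the top basic cohomology quoted in Remark \ref{S.Rk:VolumeFormSimplification}, but verifying them cleanly is the genuinely Sasaki-specific ingredient that the standard Kähler argument does not directly provide. Once this is done, the sign of the bracket-squared integral delivers the conclusion.
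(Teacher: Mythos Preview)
The paper does not supply its own proof of this proposition: it is simply quoted from \cite[Theorem 1]{guan_geodesic_nodate} with no argument given. So there is nothing in the present paper to compare your proposal against.

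That said, your approach is exactly the one carried out in the cited reference of Guan--Zhang: extend $\psi_1,\psi_2$ as constant vector fields on $\mathcal{H}$, compute $R$ from the explicit Christoffel symbol $\Gamma(\psi_i,\psi_j)=-\frac{1}{4}g_\phi(\nabla\psi_i,\nabla\psi_j)$, integrate by parts along the transverse distribution, and arrive at the transverse Poisson-bracket formula
$$
\scal{R(\psi_1,\psi_2)\psi_2}{\psi_1}_\phi = -\frac{1}{4}\int_M \{\psi_1,\psi_2\}_\phi^2 \,\eta\wedge d\eta_\phi^n \le 0.
$$
The Sasaki-specific points you flag (basicity of $\{\psi_1,\psi_2\}_\phi$ and absence of Reeb-direction boundary terms) are genuine but routine: the transverse Hamiltonian vector field of a basic function lies in $\ker\eta_\phi$ and commutes with $\xi$, so the bracket of two basic functions is basic; and since every form appearing in the integrands is basic, Stokes' theorem on the closed manifold $M$ applies without residual terms, exactly as in Remark \ref{S.Rk:VolumeFormSimplification}. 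Your sketch is therefore a faithful outline of the actual proof.
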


\subsection{Geodesic Equations}
\label{Section:GeodesicEquation}
In this section we present different equivalent formulations for the geodesic equation in $\mathcal{H}$ and give an example on the $3$-sphere $\mathbb{S}^3$.
The natural geodesic equation in $\mathcal{H}$ is $\nabla_{\dot{\phi}}\dot{\phi} = 0$,  which writes:
\begin{equation}
\label{S.eq:GeodesicEquation}
\ddot{\phi} - \frac{1}{4}g_{\phi}(\nabla \dot{\phi}, \nabla \dot{\phi})=0.
\end{equation}
In \cite{godlinski_locally_2000}, it has been proved that at any point in $M$, one can choose a local system of coordinates $(\tau, z_1,...,z_n)\in (-\delta, \delta) \times V\subset \R \times \C^n$ such that (using Einstein summation convention):
\begin{equation}
\label{S.eq:StructureInCoord}
     \begin{cases}
        \xi = \frac{\partial}{\partial \tau} \\
        \eta = d\tau - i \left(\frac{\partial h}{\partial z_j}dz_j  - \frac{\partial h}{\partial \overline{z}_j}d\overline{z}_j\right)\\
        \Phi = i\left( X_j \otimes dz_j - \overline{X}_{j} \otimes d\overline{z}_j \right) \\
        g = \eta \otimes \eta + \frac{\partial^2 h}{\partial z_k \partial \overline{z}_j} (dz_k\otimes d\overline{z}_j + d\overline{z}_j\otimes dz_k),
     \end{cases}
\end{equation}
where $h$ is a local, real valued, basic function (i.e. $\xi h =0$), such that $g$ is a positive definite and 
$$
X_j = \frac{\partial}{\partial z_j} + i \frac{\partial h}{\partial z_j} \frac{\partial}{\partial \tau} \; ;  \; 
\overline{X}_j = \frac{\partial}{\partial \overline{z}_j} - i \frac{\partial h}{\partial \overline{z}_j} \frac{\partial}{\partial \tau}.
$$
Now, for $\phi\in\mathcal{H}$, the induced Sasaki structure can be locally written in the same coordinate system as in \eqref{S.eq:StructureInCoord} replacing $h$ by $h_\phi := h + \frac{1}{2}\phi$. In such a coordinate system, the geodesic equation can be rewritten as:
\begin{equation}
\label{S.eq:GeodesicEquationCoord}
\ddot{\phi} - \frac{1}{2}h_{\phi}^{,j\overline{k}}\frac{\partial \dot{\phi}}{\partial z_k} \frac{\partial \dot{\phi}}{\partial \overline{z}_j} = 0.
\end{equation}
P. Guan and X. Zhang \cite[Proposition 2]{guan_regularity_2009} showed that the geodesic equation \eqref{S.eq:GeodesicEquation} can be reformulated as a Monge-Ampère type equation on the cone $C(M)$. Given $\phi_t$ be a smooth path in $\mathcal{H}$, we define $\psi$ on $M\times \left[1,\frac{3}{2}\right]$ by: 
\begin{equation}
\label{S.Def:PSI}
\psi(\cdot,r) := \phi_{2(r-1)}(\cdot) + 4\log(r).
\end{equation}
We set
$
\Omega_{\psi} := \omega_C + \frac{r^2}{2}\left(dd^c \psi - \frac{\partial \psi}{\partial r} dd^c r\right),
$
where $\omega_C$ is the K\"ahler form on the cone and $d$, $d^c$ are the usual operators on the cone.

\begin{Prop}[{\cite[Proposition 2]{guan_regularity_2009}}]
Fix $\varepsilon \geq 0$. The following Dirichlet problems are equivalent.
\begin{equation}
\label{S.eq:GeodesicsDirichletEpsilon}
	\begin{cases}
		\left(\ddot{\phi} - \frac{1}{4}g_{\phi}(\nabla \dot{\phi}, \nabla \dot{\phi})\right)\eta\wedge d\eta_{\phi}^n= \varepsilon \eta\wedge d\eta^n \text{ on } M\times(0,1)\\
		\phi\vert_{t=0} = \phi_0 \\
		\phi\vert_{t=1} = \phi_1.
	\end{cases}
\end{equation}

\begin{equation}
\label{S.eq:GeodesicDirichletMAEpsilon}
	\begin{cases}
		\Omega_{\Psi}^{n+1}= \varepsilon r^2 \omega_C^{n+1} \text{ on } M\times (1,\frac{3}{2}) \\
		\psi\vert_{t=1} = \psi_1 \\
		\psi\vert_{t=\frac{3}{2}} = \psi_{\frac{3}{2}}.
	\end{cases}
\end{equation}
\end{Prop}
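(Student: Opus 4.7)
The plan is to verify the equivalence by a direct pointwise computation in the Sasaki coordinates of \eqref{S.eq:StructureInCoord}. At any point of $M$, pick such local coordinates $(\tau, z_1, \ldots, z_n)$ and lift them to holomorphic coordinates on the cone: since $I(r\partial_r) = \xi = \partial_\tau$, the function $w := \log r + i\tau$ is holomorphic, and together with the $z_j$ it provides an adapted local complex chart $(w, z_1, \ldots, z_n)$ on $C(M)$ in which $\omega_C$, $d\eta$ and $dd^c$ all admit explicit block expressions determined by the local Sasaki potential $h_\phi = h + \phi/2$.

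The core of the argument is to expand $\Omega_\Psi$ in this chart. Writing $\psi = \phi_{2(r-1)} + 4\log r$, one has $\partial_r\psi = 2\dot\phi + 4/r$, and the subtraction $-\partial_r\psi\, dd^c r$ is tailored so that the combination $dd^c\psi - \partial_r\psi\, dd^c r$ has no spurious radial component coming from the $4\log r$ piece; what survives is (i) a transverse $(1,1)$-block equal to $dd^c_B\phi_t$ which, combined with the transverse part of $\omega_C$, produces $\tfrac{r^2}{2}d\eta_\phi$; (ii) a pure $w\bar{w}$-component proportional to $\ddot\phi$; and (iii) mixed $w\bar{z}_j$-components proportional to $\partial_{z_j}\dot\phi$. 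Applying the Schur complement to the resulting $(n+1)\times(n+1)$ complex Hessian matrix yields
\begin{equation}
\Omega_\Psi^{n+1} = c\, r^2\Bigl(\ddot\phi - \tfrac{1}{4}g_\phi(\nabla\dot\phi, \nabla\dot\phi)\Bigr)\, \eta\wedge d\eta_\phi^n \wedge \tfrac{dr}{r},
\end{equation}
for an explicit numerical constant $c$, in which the bracketed quantity is exactly the geodesic operator of \eqref{S.eq:GeodesicEquation} (this is consistent with the coordinate reformulation \eqref{S.eq:GeodesicEquationCoord}).

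The same chart gives $r^2\,\omega_C^{n+1} = c\, \eta\wedge d\eta^n \wedge \tfrac{dr}{r}$ with the same normalizing constant, using Remark \ref{S.Rk:VolumeFormSimplification} to recognize $\eta\wedge d\eta^n$ as the transverse top form times $\eta$. Comparing the two top forms degree-by-degree, the Monge-Ampère equation \eqref{S.eq:GeodesicDirichletMAEpsilon} becomes precisely the geodesic equation \eqref{S.eq:GeodesicsDirichletEpsilon} with the same $\varepsilon$. The boundary conditions match trivially: the shift $4\log r$ is independent of the choice of path $\phi_t$ and $t = 2(r-1)$ sends $\{1, 3/2\}$ to $\{0,1\}$.

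The main technical obstacle is controlling the off-diagonal $w\bar{z}_j$ terms in $dd^c\psi$ coming from $\partial_{z_j}\dot\phi$: it is precisely the Schur complement of these entries against the transverse Kähler block that reconstructs the term $-\tfrac{1}{4}g_\phi(\nabla\dot\phi,\nabla\dot\phi)$ with the correct sign and normalization. Choosing a frame adapted to $h_\phi$ at the base point, so that $h_\phi^{,j\bar{k}} = \delta_{jk}$ there, reduces the verification to a finite-dimensional linear algebra identity expressing the geodesic operator as a ratio of complex Hessian determinants.
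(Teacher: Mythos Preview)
The paper does not supply its own proof of this proposition: it is stated with the citation \cite[Proposition 2]{guan_regularity_2009} and no argument follows. So there is nothing in the present paper to compare your proposal against.

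That said, your outline is exactly the computation carried out by Guan--Zhang in the cited reference: introduce the holomorphic coordinate $w=\log r+i\tau$ on the cone, write $\Omega_\Psi$ as a block $(n+1)\times(n+1)$ complex Hessian in $(w,z_1,\dots,z_n)$, and take the Schur complement of the transverse K\"ahler block to extract the factor $\ddot\phi-\tfrac14 g_\phi(\nabla\dot\phi,\nabla\dot\phi)$. Your identification of the off-diagonal terms as the source of the gradient contribution is correct and is indeed the crux of the calculation. One small bookkeeping point: your displayed identities for $\Omega_\Psi^{n+1}$ and $r^2\omega_C^{n+1}$ cannot both hold with a purely numerical constant $c$ and the radial factor $dr/r$, since $\omega_C^{n+1}$ carries an explicit power $r^{2n+1}\,dr$ (from $\omega_C=r\,dr\wedge\overline\eta+\tfrac{r^2}{2}d\overline\eta$); the same power appears on the left, so the equivalence is unaffected, but you should track the $r$-weights rather than absorb them into $c$.
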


\begin{Example}
Recall that, for a Kähler manifold $(X,\omega)$, the geodesic equation on $\mathcal{H}(X,\omega)$ is given, in a chart where $\omega = \frac{i}{2}\omega_{j\overline{k}}dz_j\wedge dz_{\overline{k}}$, by: $\ddot{\phi} - 2\omega_\phi^{j\overline{k}}\frac{\partial \dot{\phi}}{\partial z_j}\frac{\partial \dot{\phi}}{\partial \overline{z}_k} = 0$ (see \cite[Equation 2.4.1]{mabuchi_symplectic_1986} and \eqref{eq:SpaceOfKahlerPotentials} for notations). In the case of $\mathbb{CP}^1$, for both usual charts, the metric $\omega_{FS}$ is given by:
$$
\omega_{FS} = \frac{i}{2}\frac{dz\wedge d\overline{z}}{(1+z\overline{z})^2}.
$$
The computations are the same in both charts since here, we have: $\omega_{1\overline{1}} = (1+z\overline{z})^{-2}$. Recall that the Hopf fibration brings back $2\omega_{FS}$ to $d\eta$: $H^\star(2\omega_{FS}) = d\eta$. Thus, writing $d\eta$ in coordinates as in \eqref{S.eq:StructureInCoord} gives: $2h_{1\overline{1}} = \omega_{1\overline{1}}\circ H$. Therefore, $\frac{1}{2}h^{1\overline{1}} = \omega^{1\overline{1}}\circ H$. Pulling back the geodesic equation for $\phi\in\mathcal{H}(\mathbb{CP}^1,2\omega_{FS})$ by $H$ exactly gives the geodesic equation \eqref{S.eq:GeodesicEquationCoord} for $\phi\circ H$ in $\mathcal{H}$, the space of Sasaki potentials on $\mathbb{S}^3$. For this reason, finding a geodesic in $\mathcal{H}$ boils down to finding one in $\mathcal{H}(\mathbb{CP}^1,2\omega_{FS})$. In a chart of $\mathbb{CP}^1$, set:
$$
\phi_t := \log(1+e^{2t}\vert z\vert^2) - 2\log(1+\vert z\vert^2).
$$
This map is defined so that $2\omega_{FS} +dd^c\phi_t = dd^c\log(1+e^{2t}\vert z \vert^2)$. Now using the reformulation of \cite[Equation 2.4.1]{mabuchi_symplectic_1986} in terms of Monge-Ampère equation (see for example \cite[Section 15.2.2.1]{guedj_degenerate_2017}), we see that $\phi_t$ is a geodesic in $\mathcal{H}(\mathbb{CP}^1, 2\omega_{FS})$, indeed: on $\mathbb{C}^2$, $\left(dd^c \log(1+\vert\zeta\vert^2\vert z \vert^2)\right)^2 =0$. Thus $\phi_t\circ H$ is a geodesic in $\mathcal{H}$. Note that, on $\mathbb{CP}^1$, in terms of metrics, this geodesic goes from $\omega_{FS}$ to $C^\star \omega_{FS}$ where $C : [z_0:z_1]\mapsto [z_0:ez_1]$ is a conformal map on $\mathbb{CP}^1$.
\end{Example}

\section{Geometry of the Metric Completion of $\mathcal{H}$}
\label{Section:GeometryMetricCompletion}
\subsection{Plurisubharmonic Functions}
\label{sec:psh}

Here, we present the material we need about plurisubharmonicity. We refer to \cite{guedj_degenerate_2017} for an extensive reference about plurisubharmonicity.

\begin{Def}
\label{S.Def:PSH}
A function $u:M\rightarrow \R\cup\left\lbrace -\infty \right\rbrace$ is said to be (transverse) $d\eta$-plurisubharmonic ($d\eta$-psh) if $u$ is invariant under the Reeb flow, if $u$ is locally the sum of a smooth function and a plurisubharmonic function and: 
$$d\eta + dd^c u \geq 0,$$
in the sense of currents. We let $PSH(M, \xi, d\eta)$ be the set of all $d\eta$-plurisubharmonic functions which are not identically $-\infty$.
\end{Def}

The first result we state about this class of function is an approximation result analogous to the K\"ahler case. It will be used in the sequel.

\begin{Prop}[{\cite[Lemma 3.1]{he_geometrical_2018}}]
\label{S.Prop:ApproxPSH}
Given $u \in PSH(M, \xi, d\eta)$, there exists a sequence $u_k \in \mathcal{H}$ decreasing to $u$.
\end{Prop}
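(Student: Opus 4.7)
My plan is to lift the problem to the K\"ahler cone $C(M)$, apply the classical Demailly regularization for plurisubharmonic functions there, and descend back to $M$ by averaging over the isometry group of the cone structure. By Proposition \ref{Prop:HdescribesSasakiStructures}, a smooth element $u\in\mathcal{H}$ corresponds to the K\"ahler cone metric $dd^c(\frac{1}{2}\tilde r^2)$ on $C(M)$ with $\tilde r^2 = r^2 e^u$. The same formula extends to the weak setting: if $u$ is basic and satisfies $d\eta + dd^c u \geq 0$, then extending $u$ to $C(M)$ by pulling back along $\pi : C(M)\to M$ so that it is $\psi$-invariant, the function $\rho_u := \frac{1}{2}r^2 e^u$ is plurisubharmonic on $(C(M),I)$. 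So the first step is a direct computation, using $\overline\eta = 2 d^c\log r$ and $\omega_C = dd^c(\frac{1}{2}r^2)$ from Proposition \ref{S.Prop:SasakianCone}, to verify that $dd^c\rho_u\geq 0$ on $C(M)$ is equivalent to $d\eta+dd^c u \geq 0$ on $M$ (together with the built-in homogeneity in $r$).

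Next, on the complex manifold $(C(M),I)$ I would invoke Demailly's approximation theorem: any psh function is the decreasing limit of a sequence of smooth psh functions, obtained by local convolution with smoothing kernels and glued via a partition of unity. This gives smooth psh $\rho_k \searrow \rho_u$ on a neighbourhood of $\{r=1\}$. Adding a decreasing perturbation $\varepsilon_k\cdot \tfrac{1}{2}r^2$ with $\varepsilon_k\searrow 0$ ensures the strict inequality $dd^c\rho_k > 0$ needed for strict positivity on the Sasaki side.

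The hard part, and the only step where Sasaki geometry really intervenes, is that to descend $\rho_k$ must take the form $\frac{1}{2}r^2 e^{u_k}$ for a basic function $u_k$ on $M$: equivalently, $\rho_k$ must be degree-$2$ homogeneous under the flow of the radial field $\psi$ and invariant under the Reeb flow, which extends holomorphically to $C(M)$ as the flow of $I\psi$. I would enforce both invariances by averaging $\rho_k$ over the closure of the joint $\psi, I\psi$ action on $C(M)$ (a torus action, holomorphic since $\psi$ is holomorphic by Definition \ref{DefKahlerConeMetric}), followed by a radial rescaling restoring degree-$2$ homogeneity. Averaging a smooth psh function over a holomorphic torus action preserves plurisubharmonicity, and both operations are compatible with the convergence $\rho_k\searrow\rho_u$ because $\rho_u$ itself is already invariant.

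Finally, setting $u_k := 2\log(2\rho_k/r^2)$ produces smooth basic functions on $M$ with $d\eta + dd^c u_k > 0$, hence in $\mathcal{H}$, and the monotone convergence $\rho_k\searrow\rho_u$ translates directly into $u_k\searrow u$. The main technical obstacle is thus the interplay between local convolution-based smoothing on the cone and the global equivariance required for the result to descend to $M$; the torus-averaging step is designed precisely to bridge this gap, while the $\varepsilon_k r^2$ perturbation handles strict positivity at each finite stage.
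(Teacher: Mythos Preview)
The paper itself gives no proof of this proposition; it is simply quoted from He--Li. So there is no in-paper argument to compare against, but your proposal has a genuine gap precisely at the step you flag as the hard one.

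The issue is the radial homogeneity. The flow of $\psi=r\partial_r$ is the dilation $(x,r)\mapsto(x,e^t r)$, which generates a non-compact $\R$-action; the ``closure of the joint $\psi,I\psi$ action'' is therefore not a compact torus, and you cannot average over it. Only the closure of the Reeb flow (generated by $I\psi$) inside the isometry group is a compact torus, and averaging over that makes $\rho_k$ Reeb-invariant but leaves the $r$-dependence uncontrolled. The fallback you call ``radial rescaling''---restricting $\rho_k$ to $\{r=1\}$ and extending back as $\tfrac{1}{2}r^2 e^{u_k}$ with $u_k:=\log(2\rho_k|_{r=1})$---does not preserve plurisubharmonicity: the inequality $dd^c\rho_k\ge 0$ genuinely uses the second $r$-derivatives of $\rho_k$, which this operation discards, so you have no reason to conclude $dd^c(\tfrac{1}{2}r^2 e^{u_k})\ge 0$, hence no reason for $u_k\in\mathcal{H}$. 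A secondary but related problem is that the global decreasing smooth approximation of (quasi-)psh functions you invoke is a theorem on \emph{compact} K\"ahler manifolds; on the open cone, or on the slab $\{a\le r\le b\}$ which is compact with boundary, it is not available as stated, and ``glue local convolutions by a partition of unity'' is exactly the difficulty those theorems are designed to overcome. The argument He--Li actually run stays on $M$ and regularises transversally: in the foliated charts $(\tau,z)$ of \eqref{S.eq:StructureInCoord} a basic $d\eta$-psh function is, up to the local potential $h$, a psh function of $z$ alone, and one applies the K\"ahler regularisation machinery directly on the transverse slices, where compactness of $M$ substitutes for compactness of a K\"ahler manifold and no homogeneity issue ever arises.
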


We can now define an analogue of the Monge-Amp\`ere measure for functions in $PSH(M,\xi,d\eta)$. For bounded plurisubharmonic function, C. Van Coevering \cite{van_coevering_monge-amp`ere_2015} adapted the Bedford-Taylor theory to the Sasaki setting, hence defining $\eta\wedge d\eta_u^n$ when $u\in PSH(M,\xi,d\eta)$ is bounded. As in the K\"ahler case, we extend the definition: for $u\in PSH(M, \xi,d\eta)$ we set $u_j := \max(u, -j)$. Following \cite[Definition 3.2]{he_geometrical_2018}, we set
$
\eta \wedge d\eta_u^n := \lim_{j\rightarrow \infty}\mathbf{1}_{\left\lbrace u > -j \right\rbrace} \eta\wedge d\eta_{u_j}^n.
$ Note that thanks to the maximum principle \cite[Proposition 3.2]{he_geometrical_2018}, this in an increasing sequence of measures. The limit is then a measure with total mass smaller that the total volume: $\int_M \eta\wedge d\eta^n$. We then define the set of functions with full Monge-Ampère mass:
$$
\mathcal{E}(M, \xi, d\eta) := \left\lbrace u \in PSH(M, \xi, d\eta) ; \int_M \eta \wedge d\eta_u^n = \int_M \eta\wedge d\eta^n \right\rbrace.
$$
At this point, we can define a special subset of $\mathcal{E}(M,\xi,d\eta)$. For any $u\in PSH(M, \xi, d\eta)$ we set $E(u) := \int_M \vert u \vert^2 \eta \wedge d\eta_u^n \in [0,+\infty].$
and we define the following finite energy class:
\begin{equation}
\label{eq:DefE2}
\mathcal{E}^2(M,\xi,d\eta) := \left\lbrace u\in\mathcal{E}(M,\xi, d\eta) ; E(u) < \infty \right\rbrace.
\end{equation}
We refer to \cite[Section 3]{he_geometrical_2018} for a deep study of finite energy class.

\subsection{Weak Geodesics}
\label{Section:WeakGeodesics}
In order to prove that the function $d$ on $\mathcal{H}\times\mathcal{H}$ defined in Section \ref{Section:DistanceHAndExtention} is a distance \cite[Theorem 3]{guan_regularity_2009}, P. Guan and X. Zhang proved, among others, a technical result \cite[Lemma 14]{guan_regularity_2009} in order to get the triangle inequality. This Lemma proves the existence of weak geodesics and gives an approximation with $\varepsilon$-geodesic. Following \cite{guan_regularity_2009} we define weak geodesics and $\varepsilon$-geodesics. Here and in the sequel, $\overline{M} := M\times\left[ 1, \frac{3}{2}\right] \subset C(M)$ and $\mathcal{C}^{1,\overline{1}}(\overline{M})$ is the closure of smooth function under the norm:
$ \norm{\cdot}_{\mathcal{C}^{1,\overline{1}}} := \norm{\cdot}_{\mathcal{C}^1(\overline{M})} + \sup_{\overline{M}}\vert \Delta \cdot \vert$, where $\Delta$ is the Riemannian Laplacian on $C(M)$.

\begin{Def}
For any $\phi_0, \phi_1 \in \mathcal{H}$, we say that :
\begin{enumerate}
\item $\phi_t$ is a \emph{weak geodesic} between $\phi_0$ and $\phi_1$ if the function $\psi =\phi_{2(r-1)} + 4\log(r)$ defined in \eqref{S.Def:PSI} is a weak solution to $\eqref{S.eq:GeodesicDirichletMAEpsilon}_{\varepsilon = 0}$ (i.e. $\psi$ is a bounded function such that $\Omega_\psi > 0$ and $\Omega_\psi^{n+1} =0$).
\item $\phi_t^{\varepsilon}$ is a $\varepsilon$-geodesic between $\phi_0$ and $\phi_1$ if $\psi^\varepsilon :=\phi^\varepsilon_{2(r-1)} + 4\log(r)$ satisfies $\Omega_{\psi^\varepsilon} > 0$ and \eqref{S.eq:GeodesicDirichletMAEpsilon}.
\end{enumerate} 
\end{Def}

\begin{Prop}[{\cite[Theorem 1]{guan_regularity_2009}}]
For any smooth $\phi_0, \phi_1 \in \mathcal{H}$, there exists a unique $\mathcal{C}^{1,\overline{1}}(\overline{M})$ weak geodesic between $\phi_0$ and $\phi_1$.
\end{Prop}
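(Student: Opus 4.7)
The plan is to follow the strategy pioneered by X. Chen in the Kähler setting, transplanted to the Kähler cone via the reformulation \eqref{S.eq:GeodesicDirichletMAEpsilon}. First I would solve the non-degenerate problem \eqref{S.eq:GeodesicDirichletMAEpsilon} for $\varepsilon > 0$, obtaining a smooth solution $\psi^\varepsilon$ on $\overline{M} = M\times[1,\tfrac{3}{2}]$ with $\Omega_{\psi^\varepsilon} > 0$, using the continuity method: one deforms from a suitable smooth subsolution (for instance a convex combination of the boundary values plus a large multiple of a convex function of $r$ chosen to dominate $\omega_C$) to the target equation, controlling the openness via the implicit function theorem applied to the linearization of the Monge-Ampère operator, and the closedness via a priori estimates. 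Since the boundary data $\psi_1$ and $\psi_{3/2}$ are basic (invariant under the Reeb flow), uniqueness of the smooth solution forces $\psi^\varepsilon$ to be invariant under the flow of $\xi$ extended trivially on $\overline{M}$, so each slice $\phi_t^\varepsilon$ is basic on $M$.

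The heart of the argument is to establish a priori bounds on $\psi^\varepsilon$ uniform in $\varepsilon \in (0,1]$. The $C^0$ bound is obtained by sandwiching $\psi^\varepsilon$ between upper and lower barriers built from the boundary data and $r^2$. The boundary $C^1$ estimate follows from standard local barriers adapted to the cone geometry and is then propagated to an interior gradient bound via a maximum-principle argument on $|\nabla \psi^\varepsilon|^2$ with respect to $\Omega_{\psi^\varepsilon}$. The decisive step is a uniform $\Delta$-estimate: applying the maximum principle to a quantity of the form $e^{-A\psi^\varepsilon}\bigl(n+1+\Delta_C \psi^\varepsilon\bigr)$ with $A$ large, and using the positivity of the bisectional curvature of the reference Kähler form $\omega_C$ on compact subsets of $\overline{M}$, one controls $\Delta_C \psi^\varepsilon$ in the interior by its boundary values; the latter are bounded by explicit barrier computations, using crucially that the boundary data is fixed in $\mathcal{H}$ and hence the transverse form is positive. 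Together these give a $C^{1,\overline{1}}(\overline{M})$ bound independent of $\varepsilon$. The main obstacle is precisely this Laplacian estimate: the presence of the radial direction and the Reeb direction means that one must verify that the Chen-Blocki-type computation carried out on the cone respects the basic structure of the slices, which is where the Sasakian hypothesis enters in an essential way.

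With uniform $C^{1,\overline{1}}$ bounds in hand, Arzelà-Ascoli yields a subsequential limit $\psi \in C^{1,\overline{1}}(\overline{M})$ as $\varepsilon \to 0^+$. Weak convergence of $\Omega_{\psi^\varepsilon}^{n+1}$ to $\Omega_{\psi}^{n+1}$ as currents (by the Bedford-Taylor theory in its cone version, as developed in \cite{van_coevering_monge-amp`ere_2015}) together with $\Omega_{\psi^\varepsilon}^{n+1} = \varepsilon r^2 \omega_C^{n+1} \to 0$ shows that $\psi$ satisfies the degenerate equation $\Omega_\psi^{n+1} = 0$ weakly, with the correct boundary values, so $\phi_t$ defined from $\psi$ through \eqref{S.Def:PSI} is a weak geodesic of regularity $C^{1,\overline{1}}$. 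For uniqueness, given two such weak geodesics $\psi$ and $\tilde{\psi}$ with the same boundary data, a standard comparison-principle argument for the degenerate complex Monge-Ampère operator acting on bounded plurisubharmonic functions on the cone, applied to $\max(\psi,\tilde\psi)$ and to the symmetric roles of $\psi,\tilde\psi$, forces $\psi = \tilde\psi$. I expect no serious difficulty beyond the uniform Laplacian estimate outlined above, since all remaining steps are transcriptions of the Kähler arguments to the compact slab $\overline{M}$ inside the Kähler cone $C(M)$.
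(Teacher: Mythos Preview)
The paper does not give a proof of this proposition at all: it is stated with the attribution \cite[Theorem 1]{guan_regularity_2009} and immediately followed by the next result. So there is no in-paper argument to compare your proposal against; your outline is essentially a sketch of what Guan--Zhang do in the cited reference, which is the Sasakian transcription of Chen's Kähler argument via the Monge--Amp\`ere reformulation on the cone.

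One small correction to your sketch: in the Laplacian estimate you invoke ``the positivity of the bisectional curvature of the reference K\"ahler form $\omega_C$'', but this is neither needed nor generally true on $\overline{M}$. The Chen--B{\l}ocki computation only requires a \emph{lower bound} on the bisectional curvature of the fixed reference metric, which holds automatically by compactness of $\overline{M}$. Otherwise the strategy you describe (continuity method for $\varepsilon>0$, barrier $C^0$ and boundary gradient estimates, interior Laplacian maximum principle, Arzel\`a--Ascoli, Bedford--Taylor convergence, comparison principle for uniqueness) is the one carried out in \cite{guan_regularity_2009}.
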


This result has been extended to \cite[Lemma 14]{guan_regularity_2009} which will be crucial in the sequel.

\begin{Prop}[{\cite[Lemma 14]{guan_regularity_2009}}]
\label{S.prop:epsGeoJacobi}
Let $\varphi_0, \varphi_1$ be smooth paths in $\mathcal{H}$: $\varphi_i : s\in [0,1] \mapsto \varphi_i(\cdot, s)\in \mathcal{H}$ ($i=1,2$). For $\varepsilon_0$ small enough, there exist a unique two parameter smooth families of curves 
\begin{align*}
\varphi : [0,1] \times [0, 1] \times (0, \varepsilon_0] &\longrightarrow \mathcal{H}\\
			(t,s,\varepsilon) &\longmapsto  \varphi(\cdot,t,s,\varepsilon)
\end{align*}
 such that the following hold:
\begin{enumerate}[label = \roman*.]
\item Setting $\psi_{s,\varepsilon}(r,\cdot) := \varphi(\cdot, 2(r-1), s, \varepsilon) + 4\log(r)$, $\psi$ verifies \eqref{S.eq:GeodesicDirichletMAEpsilon} and $\Omega_\psi > 0$. In particular, for fixed $s$, we get a \emph{$\varepsilon$-geodesic} between $\varphi_0(\cdot,s)$ and $\varphi_1(\cdot,s)$.
\item There exists a uniform constant $C$ which depends only on $\varphi_0$ and $\varphi_1$ such that:
$$
\vert \varphi \vert + \left\vert \frac{\partial \varphi}{\partial s} \right\vert + \left\vert \frac{\partial \varphi}{\partial t} \right\vert < C\quad ; \quad 0 < \frac{\partial^2 \varphi}{\partial t^2} < C\quad ; \quad \frac{\partial^2 \varphi}{\partial s^2} < C.
$$
\item For fixed $s$, the $\varepsilon$-approximating geodesic $\varphi(\cdot,t,s,\varepsilon)$ converges, when $\varepsilon\rightarrow 0$, to the unique geodesic between $\varphi_0(\cdot,s)$ and $\varphi_1(\cdot,s)$ in $\mathcal{C}^{1,\overline{1}}$-topology.
\end{enumerate}
\end{Prop}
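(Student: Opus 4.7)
The plan is to reduce everything to a one-parameter family of Dirichlet problems for a non-degenerate complex Monge-Ampère equation on the truncated cone $\overline{M}=M\times[1,3/2]\subset C(M)$, solve each problem via the continuity method, differentiate the solutions in $s$ and $\varepsilon$ through an implicit function theorem argument, and finally pass to the limit $\varepsilon\to 0$ using uniform a priori estimates.

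\smallskip

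\textbf{Step 1 (existence for fixed $s,\varepsilon$).} For fixed $s\in[0,1]$ and $\varepsilon>0$, view \eqref{S.eq:GeodesicDirichletMAEpsilon} as a Dirichlet problem on $\overline{M}$ for a strictly positive right-hand side $\varepsilon r^{2}\omega_{C}^{n+1}$. Since the problem is non-degenerate, the Caffarelli-Kohn-Nirenberg-Spruck machinery, adapted to the Kähler cone / Sasaki framework by Van Coevering \cite{van_coevering_monge-amp`ere_2015} and Guan-Zhang \cite{guan_regularity_2009}, yields a unique smooth solution $\psi_{s,\varepsilon}$ with $\Omega_{\psi_{s,\varepsilon}}>0$, the continuity method running through the standard $\mathcal{C}^{0},\mathcal{C}^{1},\mathcal{C}^{2}$ a priori estimates. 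Uniqueness comes from the comparison principle applied to the strictly elliptic operator $\Omega_{\psi}^{n+1}$.

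\smallskip

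\textbf{Step 2 (joint smoothness in $(s,\varepsilon)$).} The boundary values $\psi_{s,\varepsilon}|_{r=1}$ and $\psi_{s,\varepsilon}|_{r=3/2}$, which come from $\varphi_{0}(\cdot,s),\varphi_{1}(\cdot,s)$ via \eqref{S.Def:PSI}, depend smoothly on $s$. Because the linearisation of $\psi\mapsto\Omega_{\psi}^{n+1}$ at $\psi_{s,\varepsilon}$ is a non-degenerate uniformly elliptic operator with trivial kernel among basic functions vanishing on $\partial\overline{M}$, the implicit function theorem in suitable Hölder spaces gives a smooth family $(s,\varepsilon)\mapsto\psi_{s,\varepsilon}$. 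This establishes item i together with the existence of the two-parameter smooth family $\varphi(\cdot,t,s,\varepsilon)$.

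\smallskip

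\textbf{Step 3 (uniform bounds in $\varepsilon$).} For item ii, one needs a priori estimates that are independent of $\varepsilon\in(0,\varepsilon_{0}]$ and uniform in $s$. The $\mathcal{C}^{0}$ bound on $\varphi$ follows from the maximum principle using barriers built by interpolating $\psi_{s,\varepsilon}|_{r=1}$ and $\psi_{s,\varepsilon}|_{r=3/2}$ linearly in $r$. The $\mathcal{C}^{1}$ bound (i.e.\ bounds on $\partial_{t}\varphi$ and $\partial_{s}\varphi$) combines boundary gradient estimates via sub/super solutions with a standard interior estimate of Blocki type. The Laplacian bound (i.e.\ $\partial_{t}^{2}\varphi<C$ and $\mathcal{C}^{1,\overline{1}}$ control) is obtained via a Yau-Aubin type maximum principle applied to $\log\Delta_{C}\psi - A\psi$ for a suitably large $A$; positivity $\partial_{t}^{2}\varphi>0$ is automatic from $\Omega_{\psi}>0$. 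For $\partial_{s}^{2}\varphi<C$, differentiate the Monge-Ampère equation twice in $s$: the function $\partial_{s}^{2}\varphi$ satisfies a linear elliptic equation (the Jacobi equation along the family of $\varepsilon$-geodesics) whose zero-order term and right-hand side are already controlled by the previous estimates, so the maximum principle yields the desired bound.

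\smallskip

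\textbf{Step 4 (passage $\varepsilon\to 0$, item iii).} The uniform $\mathcal{C}^{1,\overline{1}}$ estimate makes the family $\{\psi^{\varepsilon}_{s}\}_{\varepsilon}$ precompact in $\mathcal{C}^{1,\alpha}$ for every $\alpha<1$. Any subsequential limit is a bounded plurisubharmonic solution of $\Omega_{\psi}^{n+1}=0$ with the correct boundary data, hence a weak geodesic between $\varphi_{0}(\cdot,s)$ and $\varphi_{1}(\cdot,s)$. By the uniqueness of the $\mathcal{C}^{1,\overline{1}}$ weak geodesic (the Proposition preceding the statement), the whole family converges and not merely along subsequences.

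\smallskip

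The main obstacle is Step 3, and inside it the $\mathcal{C}^{1,\overline{1}}$ bound independent of $\varepsilon$ together with the $\partial_{s}^{2}\varphi$ estimate. The boundary $\mathcal{C}^{2}$ estimate for the Monge-Ampère equation degenerates as $\varepsilon\to 0$ in a delicate way, and requires careful barrier constructions on the cone that respect both the Reeb-invariance (basicness) and the conical geometry near $r=1,3/2$; the Jacobi estimate for $\partial_{s}^{2}\varphi$ then rides on top of this and cannot be done prior to it.
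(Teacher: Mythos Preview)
The paper does not prove this proposition at all: it is stated as \cite[Lemma 14]{guan_regularity_2009} and used as a black box, with no argument given. There is therefore nothing in the paper to compare your sketch against.

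That said, your outline is a faithful summary of the strategy in Guan--Zhang's original proof: solve the non-degenerate Dirichlet problem for each fixed $(s,\varepsilon)$ by the continuity method, obtain smooth dependence on the parameters via the implicit function theorem (the linearisation being invertible because $\varepsilon>0$), establish uniform $\mathcal{C}^{1,\overline{1}}$ estimates independent of $\varepsilon$ by maximum-principle and barrier arguments, differentiate in $s$ to control $\partial_s\varphi$ and $\partial_s^2\varphi$ through the linearised (Jacobi) equation, and pass to the limit $\varepsilon\to 0$ by compactness plus uniqueness of the weak geodesic. Your identification of the uniform-in-$\varepsilon$ boundary $\mathcal{C}^2$ estimate and the $\partial_s^2\varphi$ bound as the delicate points is accurate. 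As a sketch this is fine; a full proof would of course require carrying out the barrier constructions and the Yau--Aubin type computation explicitly in the Sasaki/cone setting, which is the actual content of \cite{guan_regularity_2009}.
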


\begin{Prop}[{\cite[Section 2.4]{van_coevering_monge-amp`ere_2015}}]
\label{S.prop:EpsilonGeodesicDecreases}
The $\varepsilon$-geodesics are decreasing to the weak geodesic.
\end{Prop}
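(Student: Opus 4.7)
My plan is to establish monotonicity of the family $\{\psi^\varepsilon\}_{\varepsilon > 0}$ in $\varepsilon$ via a strong maximum principle on the compact manifold $\overline{M}$, and then identify its pointwise decreasing limit with the weak geodesic through a Bedford-Taylor-type continuity argument for the Monge-Ampère-type operator $\Omega^{n+1}_{\psi}$.

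For the monotonicity step I would fix $0 < \varepsilon_1 < \varepsilon_2 \leq \varepsilon_0$ and argue by contradiction. Since $\psi^{\varepsilon_1}$ and $\psi^{\varepsilon_2}$ are smooth with identical Dirichlet data, if $w := \psi^{\varepsilon_1} - \psi^{\varepsilon_2}$ is not everywhere non-negative then it attains a strictly negative minimum at some interior point $p \in \overline{M}$. At $p$ one has $\partial_r w(p) = 0$ and $dd^c w|_p \geq 0$ as Hermitian $(1,1)$-forms, so
\begin{equation*}
\Omega_{\psi^{\varepsilon_1}} - \Omega_{\psi^{\varepsilon_2}} = \frac{r^2}{2}\left(dd^c w - (\partial_r w)\, dd^c r\right) \geq 0 \quad \text{at } p.
\end{equation*}
Combined with $\Omega_{\psi^{\varepsilon_2}}|_p > 0$ this gives $\Omega_{\psi^{\varepsilon_1}}^{n+1}|_p \geq \Omega_{\psi^{\varepsilon_2}}^{n+1}|_p$, which by \eqref{S.eq:GeodesicDirichletMAEpsilon} reads $\varepsilon_1 \geq \varepsilon_2$, a contradiction. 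Hence $\psi^{\varepsilon_1} \geq \psi^{\varepsilon_2}$ throughout $\overline{M}$.

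I would then take the pointwise limit $\psi_0 := \lim_{\varepsilon\to 0^+}\psi^\varepsilon$, whose boundedness is secured by the uniform estimates of Proposition \ref{S.prop:epsGeoJacobi}. As a decreasing limit of bounded $\Omega$-plurisubharmonic functions sharing the same boundary values, $\psi_0$ is itself bounded and $\Omega$-plurisubharmonic, with the correct Dirichlet data. Invoking the continuity of the Bedford-Taylor operator under decreasing sequences of bounded plurisubharmonic functions, adapted to the Kähler cone setting in \cite{van_coevering_monge-amp`ere_2015}, I would conclude
\begin{equation*}
\Omega_{\psi_0}^{n+1} = \lim_{\varepsilon\to 0^+} \Omega_{\psi^\varepsilon}^{n+1} = \lim_{\varepsilon\to 0^+} \varepsilon\, r^2 \omega_C^{n+1} = 0
\end{equation*}
in the weak sense, so $\psi_0$ is a bounded weak geodesic joining $\phi_0$ to $\phi_1$; uniqueness of the $\mathcal{C}^{1,\overline{1}}$ weak geodesic then forces $\psi_0$ to coincide with it.

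The hardest part of this plan is the pluripotential continuity invoked in the second step, because $\Omega_\psi$ is not of the standard form $\omega_C + dd^c(\cdot)$: it carries the extra radial-derivative term $-(r^2/2)(\partial_r \psi)\, dd^c r$. For the smooth monotonicity argument the interior-minimum calculation handles this transparently, but for the weak limit one needs to know that $\Omega_\psi$ genuinely arises as a Kähler form on a modified cone (cf.\ Proposition \ref{Prop:HdescribesSasakiStructures}), so that the Bedford-Taylor theory and its weak-convergence lemmas apply verbatim. Once that reduction is in place, the decreasing convergence at the level of Monge-Ampère measures is standard and the conclusion follows.
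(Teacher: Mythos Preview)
The paper gives no proof of this proposition; it merely records the result with a citation to Van Coevering. Your comparison-principle argument is the right idea and is essentially how one proves the analogous statement in the K\"ahler setting.

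However, there is a direction mix-up that you should sort out. Your interior-minimum computation is correct and yields $\psi^{\varepsilon_1} \geq \psi^{\varepsilon_2}$ whenever $0 < \varepsilon_1 < \varepsilon_2$; that is, the family $\varepsilon \mapsto \psi^\varepsilon$ is \emph{decreasing in $\varepsilon$}, hence \emph{increasing as $\varepsilon \to 0^+$}. You then write ``as a decreasing limit of bounded $\Omega$-plurisubharmonic functions'', which contradicts what you just proved. In fact the word ``decreasing'' in the stated proposition appears to be a slip in the paper itself: in the proof of Theorem~\ref{S.th:Cat(0)Smooth} the author uses precisely that the $\varepsilon'$-geodesic \emph{increases} to the weak geodesic as $\varepsilon' \searrow 0$. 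So your monotonicity step actually proves the correct statement, not the one printed.

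For the limit step, once you fix the direction the concern you raise about the nonstandard term $-(r^2/2)(\partial_r\psi)\,dd^c r$ is largely moot: the uniform $\mathcal{C}^{1,\overline{1}}$ bounds of Proposition~\ref{S.prop:epsGeoJacobi} give convergence of $\psi^\varepsilon$ in $\mathcal{C}^{1,\alpha}$ (in particular of $\partial_r\psi^\varepsilon$ uniformly), so $\Omega_{\psi^\varepsilon}$ converges to $\Omega_{\psi_0}$ with bounded coefficients and the Bedford--Taylor continuity applies without needing to recast $\Omega_\psi$ as a standard K\"ahler potential. Uniqueness of the weak geodesic then finishes the argument as you indicate.
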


\subsection{Distance on $\mathcal{H}$ and on $\mathcal{E}^2$}
\label{Section:DistanceHAndExtention}
In this section, following \cite{guan_regularity_2009, he_geometrical_2018}, we define a natural distance $d$ on $\mathcal{H}$ for which we give a nice expression in Proposition \ref{S.Prop:DistanceLimiteEpsilonGeodesics}. Then, we extend the distance $d$ to $\mathcal{E}^2(M,\xi,d\eta)$.
\subsubsection{Distance for Smooth potentials}
Recall that we defined a Riemannian metric on $\mathcal{H}$ for which the length of a smooth path $\phi_t \in \mathcal{H}$ is given by:
$$
l(\phi) := \int_0^1 \sqrt{\int_M (\dot{\phi}_t)^2 \eta \wedge d\eta^n_{\phi_t}} dt.
$$
P. Guan and X. Zhang \cite[Theorem 3]{guan_regularity_2009} proved that the length of the weak geodesic induces a distance. A straightforward consequence of \cite[Theorem 3 and Equation (7.15)]{guan_regularity_2009} is that this length is equal to:
$$
d(\phi_0, \phi_1) = \inf\left\lbrace l(\phi) \; \vert \; \phi \text{ is a smooth path joining } \phi_0 \text{ and } \phi_1  \right\rbrace.
$$
In particular, $d$ is a distance on $\mathcal{H}$ and we have:

\begin{Prop}[{\cite[Equation (7.15)]{guan_regularity_2009}}]
\label{S.prop:distanceGeodesiquesEnergy1}
Let $\phi_0, \phi_1 \in \mathcal{H}$ and $\phi^{\varepsilon}_t$ be the $\varepsilon$-geodesic between $\phi_0$ and $\phi_1$ then:
$$
d(\phi_0, \phi_1) = \lim_{\varepsilon\rightarrow 0}\int_0^1 \sqrt{\int_M (\dot{\phi_t^{\varepsilon}})^2 \eta \wedge d\eta^n_{\phi^{\varepsilon}_t}} dt.
$$
\end{Prop}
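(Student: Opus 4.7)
The plan is to compute $\lim_{\varepsilon \to 0} l(\phi^\varepsilon)$ explicitly via an approximate energy conservation along the $\varepsilon$-geodesic, and then identify the limit with $d(\phi_0,\phi_1)$. The two ingredients are (A) a one-parameter computation showing that the speed $\scal{\dot\phi^\varepsilon_t}{\dot\phi^\varepsilon_t}_{\phi^\varepsilon_t}$ is almost constant in $t$, and (B) a two-parameter comparison argument ensuring that $\phi^\varepsilon$ is ``almost minimizing'' among smooth paths.

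\textbf{Step 1 (energy is almost constant).} Set $E^\varepsilon(t) := \scal{\dot\phi^\varepsilon_t}{\dot\phi^\varepsilon_t}_{\phi^\varepsilon_t}$. The compatibility of $\nabla$ with the metric recorded before Definition \ref{Def:Connection} gives $\tfrac{dE^\varepsilon}{dt} = 2\,\scal{\nabla_{\dot\phi^\varepsilon}\dot\phi^\varepsilon}{\dot\phi^\varepsilon}_{\phi^\varepsilon_t}$. The $\varepsilon$-geodesic equation \eqref{S.eq:GeodesicsDirichletEpsilon} reads $(\nabla_{\dot\phi^\varepsilon}\dot\phi^\varepsilon)\,\eta\wedge d\eta^n_{\phi^\varepsilon_t} = \varepsilon\,\eta\wedge d\eta^n$, so
$$
\frac{dE^\varepsilon}{dt} = 2\varepsilon \int_M \dot\phi^\varepsilon_t\,\eta\wedge d\eta^n.
$$
The uniform $\mathcal{C}^1$-bound from Proposition \ref{S.prop:epsGeoJacobi}(ii) yields $\left|\tfrac{dE^\varepsilon}{dt}\right| \le C\varepsilon\,\Vol(M)$, hence $E^\varepsilon(t)$ is $O(\varepsilon)$-close to $E^\varepsilon(0)$ uniformly on $[0,1]$, and
$$
l(\phi^\varepsilon) = \int_0^1 \sqrt{E^\varepsilon(t)}\,dt = \sqrt{E^\varepsilon(0)} + O(\sqrt\varepsilon).
$$

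\textbf{Step 2 (passage to the limit).} By Proposition \ref{S.prop:epsGeoJacobi}(iii), $\phi^\varepsilon \to \phi^0$ in $\mathcal{C}^{1,\overline{1}}$-topology, where $\phi^0$ is the weak geodesic. Uniform convergence of $\dot\phi^\varepsilon$ together with Bedford--Taylor continuity of the Monge--Ampère operator under $\mathcal{C}^{1,\overline{1}}$-convergence (extended to the Sasaki setting by Van Coevering) give weak convergence $\eta\wedge d\eta^n_{\phi^\varepsilon_t} \to \eta\wedge d\eta^n_{\phi^0_t}$, and therefore pointwise $E^\varepsilon(t) \to E^0(t) := \int_M (\dot\phi^0_t)^2\,\eta\wedge d\eta^n_{\phi^0_t}$. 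The bound from Step 1 passes to the limit, forcing $E^0$ to be constant on $[0,1]$; dominated convergence (the uniform bound $E^\varepsilon\le C^2\Vol(M)$ supplies the envelope) then gives $l(\phi^\varepsilon)\to \sqrt{E^0(0)}$.

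\textbf{Step 3 (identification with $d$).} The upper bound $d(\phi_0,\phi_1)\le l(\phi^\varepsilon)$ is immediate since $\phi^\varepsilon$ is a smooth path with the prescribed endpoints, so $d(\phi_0,\phi_1)\le \lim_{\varepsilon\to 0}l(\phi^\varepsilon)$. For the reverse inequality, given any smooth competitor $\sigma_s\in\mathcal{H}$ joining $\phi_0$ to $\phi_1$, I would apply Proposition \ref{S.prop:epsGeoJacobi} with $\varphi_0(s)\equiv \phi_0$ and $\varphi_1(s) = \sigma_s$ to produce a two-parameter family of $\varepsilon$-geodesics $\varphi(\cdot,t,s,\varepsilon)$ from $\phi_0$ to $\sigma_s$. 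A first-variation computation for the length $s\mapsto l(\varphi(\cdot,\cdot,s,\varepsilon))$, using the uniform bounds in Proposition \ref{S.prop:epsGeoJacobi}(ii) to absorb the $\varepsilon$-correction coming from the right-hand side of \eqref{S.eq:GeodesicsDirichletEpsilon}, would give $l(\phi^\varepsilon)\le l(\sigma) + O(\sqrt\varepsilon)$; letting $\varepsilon\to 0$ and taking the infimum over $\sigma$ yields $\lim_{\varepsilon\to 0} l(\phi^\varepsilon)\le d(\phi_0,\phi_1)$, closing the argument.

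\textbf{Main obstacle.} Steps 1--2 are essentially mechanical once Proposition \ref{S.prop:epsGeoJacobi} is granted. The delicate point is Step 3: converting the uniform $\mathcal{C}^{1,\overline{1}}$-control into a genuine minimization statement for $\phi^\varepsilon$ requires handling first-variation boundary contributions against an arbitrary smooth competitor and keeping the error terms sharp enough to vanish with $\varepsilon$. The bounds on \emph{both} $\partial_t\varphi$ and $\partial_s\varphi$ in Proposition \ref{S.prop:epsGeoJacobi}(ii) are precisely what make this comparison work, which is why the two-parameter family (and not just the existence of individual $\varepsilon$-geodesics) enters crucially at this point.
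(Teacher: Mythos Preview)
The paper does not actually prove this proposition: it is stated as a direct citation to Guan--Zhang \cite[Equation (7.15)]{guan_regularity_2009}, with the surrounding text attributing both the fact that the weak-geodesic length is a distance and the identification with the infimum to \cite[Theorem 3 and Equation (7.15)]{guan_regularity_2009}. There is therefore no in-paper proof to compare against.

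That said, your Steps 1--2 are correct and in fact reappear almost verbatim in the paper's proof of the \emph{next} statement, Proposition \ref{S.Prop:DistanceLimiteEpsilonGeodesics}: there the paper sets $e^\varepsilon(t)=\int_M(\dot\phi^\varepsilon_t)^2\,\eta\wedge d\eta^n_{\phi^\varepsilon_t}$, computes $\tfrac{d}{dt}e^\varepsilon(t)=2\varepsilon\int_M\dot\phi^\varepsilon_t\,\eta\wedge d\eta^n$, bounds this by $2\varepsilon C\Vol(M)$ using the uniform $\mathcal{C}^1$-estimate, and passes to the limit via weak convergence of Monge--Amp\`ere measures just as you do. So the mechanism you isolated is exactly the one the paper relies on downstream.

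Step 3 is where your write-up remains a sketch. The bound $d(\phi_0,\phi_1)\le l(\phi^\varepsilon)$ is free, but the reverse inequality --- that $\varepsilon$-geodesics are asymptotically minimizing among all smooth competitors --- is precisely the content of \cite[Theorem 3]{guan_regularity_2009} that the paper imports as a black box. Your two-parameter first-variation idea is the right strategy (and is essentially how Guan--Zhang argue), but the sentence ``a first-variation computation \ldots\ would give $l(\phi^\varepsilon)\le l(\sigma)+O(\sqrt\varepsilon)$'' hides the actual work: one must control the boundary term $\scal{X}{Y}_\phi\vert_{t=1}$ by Cauchy--Schwarz against $\sqrt{E^\varepsilon}\cdot|\dot\sigma_s|$, handle the length of the $\varepsilon$-geodesic from $\phi_0$ to itself at $s=0$, and check that the interior error really is $o(1)$. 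You flag this honestly as the main obstacle, but as written Step 3 is an outline of the Guan--Zhang argument rather than a proof.
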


The following will be of a great use in the proof of Theorem \ref{S.th:Cat(0)Smooth}.

\begin{Prop}
\label{S.Prop:DistanceLimiteEpsilonGeodesics}
Given $\phi_0, \phi_1 \in \mathcal{H}$ and $\phi_t$ be the weak geodesic and $\phi^{\varepsilon}_t$ the $\varepsilon$-geodesic between $\phi_0$ and $\phi_1$, one has:
$$
\forall t \in [0,1], \; d(\phi_0, \phi_1)^2 = \int_M (\dot{\phi}_t)^2 \eta \wedge d\eta_{\phi_t}^n = \lim_{\varepsilon\rightarrow 0}\int_M (\dot{\phi}_t^{\varepsilon})^2 \eta \wedge d\eta_{\phi_t^{\varepsilon}}^n .
$$
\end{Prop}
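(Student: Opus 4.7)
The plan is to exploit the conservation of energy along the smooth $\varepsilon$-geodesics, then pass to the limit using the $\mathcal{C}^{1,\overline{1}}$-approximation from Proposition \ref{S.prop:epsGeoJacobi}(iii) together with the monotone convergence of the associated Monge-Ampère measures from Proposition \ref{S.prop:EpsilonGeodesicDecreases}.

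First, I would compute $\tfrac{d}{dt}\int_M (\dot{\phi}^{\varepsilon}_t)^2\,\eta\wedge d\eta_{\phi^{\varepsilon}_t}^n$ for the smooth $\varepsilon$-geodesic. Using the compatibility of the connection $\nabla$ with $\scal{\cdot}{\cdot}_\phi$ recalled in Definition \ref{Def:Connection}, this derivative equals $2\scal{\nabla_{\dot{\phi}^{\varepsilon}}\dot{\phi}^{\varepsilon}}{\dot{\phi}^{\varepsilon}}_{\phi^{\varepsilon}}$. The $\varepsilon$-geodesic equation \eqref{S.eq:GeodesicsDirichletEpsilon} gives $\nabla_{\dot{\phi}^{\varepsilon}}\dot{\phi}^{\varepsilon}\cdot\eta\wedge d\eta_{\phi^{\varepsilon}}^n = \varepsilon\,\eta\wedge d\eta^n$, so
\begin{equation}
\frac{d}{dt}\int_M (\dot{\phi}^{\varepsilon}_t)^2\,\eta\wedge d\eta_{\phi^{\varepsilon}_t}^n = 2\varepsilon \int_M \dot{\phi}^{\varepsilon}_t\,\eta\wedge d\eta^n.
\end{equation}
The uniform bound $|\dot{\phi}^{\varepsilon}_t|\leq C$ of Proposition \ref{S.prop:epsGeoJacobi}(ii) then shows that $t\mapsto \|\dot{\phi}^{\varepsilon}_t\|^2_{\phi^{\varepsilon}_t}$ has derivative of order $O(\varepsilon)$ uniformly in $t$, so this function differs from a constant by $O(\varepsilon)$.

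Second, combining this near-constancy with Proposition \ref{S.prop:distanceGeodesiquesEnergy1} gives, for every fixed $t\in[0,1]$,
\begin{equation}
d(\phi_0,\phi_1) = \lim_{\varepsilon\to 0}\int_0^1 \sqrt{\|\dot{\phi}^{\varepsilon}_s\|^2_{\phi^{\varepsilon}_s}}\,ds = \lim_{\varepsilon\to 0}\sqrt{\|\dot{\phi}^{\varepsilon}_t\|^2_{\phi^{\varepsilon}_t}},
\end{equation}
which proves the second equality in the statement (after squaring).

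Third, it remains to identify this limit with $\int_M(\dot{\phi}_t)^2\,\eta\wedge d\eta_{\phi_t}^n$ for each $t$. By Proposition \ref{S.prop:epsGeoJacobi}(iii), $\phi^{\varepsilon}\to \phi$ in $\mathcal{C}^{1,\overline{1}}$, so $(\dot{\phi}^{\varepsilon}_t)^2 \to (\dot{\phi}_t)^2$ uniformly on $M$. By Proposition \ref{S.prop:EpsilonGeodesicDecreases} the $\varepsilon$-geodesics decrease to the weak geodesic, and the uniform $\mathcal{C}^{1,\overline{1}}$-bound on $\phi^{\varepsilon}$ keeps the $d\eta_{\phi^{\varepsilon}_t}$ uniformly bounded transverse Kähler forms. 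Bedford-Taylor continuity along decreasing sequences of bounded (transversely) plurisubharmonic functions—adapted to the Sasaki setting by Van Coevering \cite{van_coevering_monge-amp`ere_2015}—then yields the weak convergence $\eta\wedge d\eta_{\phi^{\varepsilon}_t}^n \to \eta\wedge d\eta_{\phi_t}^n$. Combining uniform convergence of the continuous integrand with weak convergence of these uniformly-bounded-mass measures produces the desired convergence of integrals, and squaring finishes both equalities.

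The main obstacle I expect is the last step: justifying convergence of the Monge-Ampère measures against the $(\dot{\phi}^{\varepsilon}_t)^2$ factor, since the weak geodesic is only $\mathcal{C}^{1,\overline{1}}$ and the mixed Monge-Ampère operator is discontinuous in general. The saving feature is monotonicity of the $\varepsilon$-geodesics in $\varepsilon$ and the uniform $\mathcal{C}^{1,\overline{1}}$-bound, which bring us into the regime where Bedford-Taylor theory applies verbatim; once this is in place the rest is a straightforward Riemannian conservation-of-energy argument.
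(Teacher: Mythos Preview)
Your proposal is correct and follows essentially the same approach as the paper: both compute the time-derivative of the energy along the $\varepsilon$-geodesic via the connection compatibility and the equation \eqref{S.eq:GeodesicsDirichletEpsilon}, deduce near-constancy of order $O(\varepsilon)$, combine with Proposition \ref{S.prop:distanceGeodesiquesEnergy1}, and then pass to the limit using uniform convergence of $\dot{\phi}^{\varepsilon}_t$ together with Bedford--Taylor-type weak convergence of the Monge--Amp\`ere measures along the decreasing family. The only cosmetic difference is that the paper extracts a uniformly convergent subsequence of $\dot{\phi}^{\varepsilon}_t$ via Ascoli from the uniform $\mathcal{C}^{1,\overline{1}}$ bounds, whereas you invoke directly the $\mathcal{C}^{1,\overline{1}}$ convergence of Proposition \ref{S.prop:epsGeoJacobi}(iii); your route is slightly cleaner since it avoids a subsequence argument.
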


\begin{proof}
P. Guan and X. Zhang \cite[Theorem 1]{guan_regularity_2009} showed that there exists a constant $C$ independent of $\varepsilon$ such that $\norm{\phi^{\varepsilon}_t}_{\mathcal{C}^2_w} \leq C$. We set
$
e^{\varepsilon}(t) := \int_M (\dot{\phi}_t^{\varepsilon})^2 \eta \wedge d\eta_{\phi_t^{\varepsilon}}^n .
$
Thus, using \eqref{S.eq:GeodesicsDirichletEpsilon} we have:
\begin{align*}
\frac{d e^{\varepsilon}}{dt} &= 2\scal{\nabla_{\dot{\phi}_t^{\varepsilon}}\dot{\phi}_t^{\varepsilon}}{\dot{\phi}_t^{\varepsilon}}_{\phi^{\varepsilon}_t} 
= 2\varepsilon \int_M \dot{\phi}_t^{\varepsilon} \eta\wedge d\eta^n
\leq 2\varepsilon C \Vol(M) := 2\varepsilon C \int_M \eta\wedge d\eta^n.
\end{align*}
For any fixed $t\in[0,1]$, this gives:
$
\vert l(\phi^{\varepsilon}) - \sqrt{e^{\varepsilon}(t)} \vert \rightarrow 0.
$
But, using the estimate of Proposition \ref{S.prop:epsGeoJacobi} and Ascoli theorem gives a subsequence such that $\dot{\phi}^{\varepsilon}_{t} \rightarrow {\phi}_t$ uniformly.

We also have the weak convergence of measures \cite[Proposition 3.1]{he_geometrical_2018}: Suppose that $u_j\in PSH(M, \xi, d\eta)\cap L^{\infty}$ decreases to $u\in PSH(M, \xi, d\eta)\cap L^{\infty}$ then 
$
\eta \wedge d\eta_{u_j}^n \rightarrow \eta \wedge d\eta_{u}^n
$
in the weak sense of measures.
This, with Proposition \ref{S.prop:distanceGeodesiquesEnergy1} gives the result. 
\end{proof}

\subsubsection{Extension of $d$ to $\mathcal{E}^2$}
As in the K\"ahler setting, using smooth approximations given by Proposition \ref{S.Prop:ApproxPSH} one can extend the distance defined on $\mathcal{H}$ to $\mathcal{E}^2(M,\xi,d\eta)$. Given $\phi_0, \phi_1 \in \mathcal{E}^2(M,\xi,d\eta)$, and $\phi_0^k, \phi_1^k \in \mathcal{H}$ decreasing respectively to $\phi_0, \phi_1$, we set
$
\tilde{d}(\phi_0, \phi_1) := \lim_{k\rightarrow\infty} d(\phi_0^k, \phi_1^k).
$
W. He and J. Li proved \cite[Lemma 4.6]{he_geometrical_2018} that the definition above is independent of the choice of the approximate sequences. They also extended T. Darvas results to Sasaki manifolds and in particular showed $(\mathcal{E}^2(M,\xi,d\eta), \tilde{d})$ is the metric completion of $(\mathcal{H}, d)$ (\cite[Theorem 2]{he_geometrical_2018}). Additionally, one can consider $t\mapsto \phi^k_t \in \mathcal{H}_{\Delta}$ the weak geodesic between $\phi^k_0$ and $\phi^k_1$. This is a decreasing sequence (this follows from the maximum principle \cite[Lemma 4.1]{he_geometrical_2018}). We set, for $t\in (0,1)$:
\begin{equation}
\label{S.def:WeakGeodesicInE2}
\phi_t := \lim_{k\rightarrow \infty} \phi^k_t.
\end{equation}

Using these notations, W. He and J. Li proved the following:

\begin{Prop}[{\cite[Lemma 4.7]{he_geometrical_2018}}]
\label{S.prop:geodesicsAreMetricGeodesics}
The map $t\mapsto \phi_t$ is a geodesic segment in the sense of metric spaces. In particular, for all $l\in (0,1)$,
$
\tilde{d}(\phi_0, \phi_l) = l \tilde{d}(\phi_0, \phi_1).
$
\end{Prop}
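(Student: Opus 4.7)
The plan is to first establish the statement for smooth endpoints and then extend by continuity. Concretely, with the approximating sequences $\phi_0^k, \phi_1^k \in \mathcal{H}$ decreasing respectively to $\phi_0, \phi_1 \in \mathcal{E}^2$, I let $\phi_t^k$ be the weak geodesic between them, so that $\phi_t^k \downarrow \phi_t$ for every $t \in (0,1)$ by \eqref{S.def:WeakGeodesicInE2}. Since the extension $\tilde{d}$ is continuous along decreasing sequences in $\mathcal{E}^2$, it suffices to show $\tilde{d}(\phi_0^k, \phi_l^k) = l \cdot d(\phi_0^k, \phi_1^k)$ for each fixed $k$ and then pass to the limit in $k$.

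For the upper bound, I would use the $\varepsilon$-geodesic $\phi_t^{k,\varepsilon}$ of Proposition \ref{S.prop:epsGeoJacobi}. The reparametrization $s \in [0,1] \mapsto \phi^{k,\varepsilon}_{sl}$ is a smooth curve in $\mathcal{H}$ from $\phi_0^k$ to $\phi_l^{k,\varepsilon}$, and a direct change of variables shows its length equals $\int_0^l \sqrt{e^\varepsilon(s)}\, ds$, where $e^\varepsilon(s) = \int_M (\dot{\phi}^{k,\varepsilon}_s)^2 \eta \wedge d\eta^n_{\phi^{k,\varepsilon}_s}$. The bound $\left\vert \frac{d}{ds} e^\varepsilon \right\vert \le 2\varepsilon C \Vol(M)$ obtained in the proof of Proposition \ref{S.Prop:DistanceLimiteEpsilonGeodesics} shows $e^\varepsilon$ is nearly constant in $s$, with common limit $d(\phi_0^k, \phi_1^k)^2$ as $\varepsilon \to 0$. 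Since $d(\phi_0^k, \phi_l^{k,\varepsilon})$ is bounded above by this length, taking $\varepsilon \to 0$ and using $\phi_l^{k,\varepsilon} \downarrow \phi_l^k$ (Proposition \ref{S.prop:EpsilonGeodesicDecreases}) together with the continuity of $\tilde{d}$ yields $\tilde{d}(\phi_0^k, \phi_l^k) \le l \cdot d(\phi_0^k, \phi_1^k)$.

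For the matching lower bound, I would invoke the triangle inequality. By the same argument applied to the geodesic run backward, $\tilde{d}(\phi_l^k, \phi_1^k) \le (1-l) \cdot d(\phi_0^k, \phi_1^k)$. Combined with $d(\phi_0^k, \phi_1^k) = \tilde{d}(\phi_0^k, \phi_1^k) \le \tilde{d}(\phi_0^k, \phi_l^k) + \tilde{d}(\phi_l^k, \phi_1^k)$, this forces both upper bounds to be equalities, and in particular $\tilde{d}(\phi_0^k, \phi_l^k) = l \cdot d(\phi_0^k, \phi_1^k)$. Letting $k \to \infty$ concludes the proof.

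The main technical hurdle lies in the upper bound step: one must control the length of the reparametrized $\varepsilon$-geodesic uniformly in $\varepsilon$, which relies both on the uniform $\mathcal{C}^2_w$ estimates of Proposition \ref{S.prop:epsGeoJacobi} and on the $O(\varepsilon)$ variation of $e^\varepsilon$. A secondary delicacy is that $\phi_l^k$ is only $\mathcal{C}^{1,\overline{1}}$, hence not a priori in $\mathcal{H}$, so invoking continuity of $\tilde{d}$ at that step requires its continuity along decreasing sequences in $\mathcal{E}^2 \cap L^\infty$ rather than only in $\mathcal{H}$; this is standard within the pluripotential framework of \cite{he_geometrical_2018}.
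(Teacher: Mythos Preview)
The paper does not supply its own proof of this proposition: it is quoted directly from \cite[Lemma 4.7]{he_geometrical_2018}, so there is no argument here to compare yours against. That said, your sketch is sound and is essentially the standard route. The upper bound via the restricted $\varepsilon$-geodesic, combined with the $O(\varepsilon)$ control on $e^\varepsilon$ and Proposition~\ref{S.prop:EpsilonGeodesicDecreases}, is exactly how one passes from the energy identity of Proposition~\ref{S.Prop:DistanceLimiteEpsilonGeodesics} to a length bound on subsegments; the lower bound by the triangle inequality is then forced. Two small remarks. First, to promote the ``in particular'' statement to the full metric-geodesic property $\tilde d(\phi_s,\phi_t)=|t-s|\,\tilde d(\phi_0,\phi_1)$ you should note that the same restriction argument applied to $[s,t]$ gives the upper bound $\tilde d(\phi_s,\phi_t)\le (t-s)\tilde d(\phi_0,\phi_1)$, while the triangle inequality with the endpoint identities gives the matching lower bound. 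Second, your caveat about $\phi_l^k\notin\mathcal{H}$ is handled precisely by Proposition~\ref{S.Prop:DistanceContinue}, which is stated for decreasing sequences in $\mathcal{E}^2$ and not only in $\mathcal{H}$, so no extra input is needed there.
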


\begin{Prop}[{\cite[Lemma 4.11]{he_geometrical_2018}}]
\label{S.Prop:DistanceContinue}
Let $u\in \mathcal{E}^2(M,\xi,d\eta)$. If $u_k \in \mathcal{E}^2(M,\xi,d\eta)$ decreases to $u$ then,
$
d(u_k, u) \rightarrow 0. 
$
\end{Prop}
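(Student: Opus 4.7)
My plan is to reduce continuity of $\tilde{d}$ under decreasing sequences to a dominated-convergence argument, after establishing a quantitative comparison of the form
$$d(\phi,\psi)^2 \leq C_n \int_M (\phi-\psi)^2\bigl(\eta\wedge d\eta_\phi^n + \eta\wedge d\eta_\psi^n\bigr) \qquad (\ast)$$
valid for $\phi,\psi \in \mathcal{H}$. To obtain $(\ast)$, consider the linear path $\phi_t := (1-t)\phi + t\psi$. Its transverse form $d\eta_{\phi_t} = (1-t)d\eta_\phi + t\, d\eta_\psi$ is positive as a convex combination of positive transverse Kähler forms, so $\phi_t$ genuinely lies in $\mathcal{H}$. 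Since $\dot\phi_t \equiv \psi - \phi$, Jensen's inequality applied to the length yields
$$d(\phi,\psi)^2 \leq \int_0^1 \int_M (\phi-\psi)^2\, \eta\wedge d\eta_{\phi_t}^n\, dt.$$
Expanding $d\eta_{\phi_t}^n$ by the binomial formula and integrating the scalar factor in $t$, each mixed term $\eta\wedge d\eta_\phi^k \wedge d\eta_\psi^{n-k}$ is pointwise dominated by a combination of the two extreme measures via the standard Cauchy--Schwarz inequality for positive transverse $(1,1)$-forms, which produces $(\ast)$.

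\textbf{Passage to $\mathcal{E}^2$.} Given $u_k \searrow u$ in $\mathcal{E}^2$, apply Proposition~\ref{S.Prop:ApproxPSH} to obtain smooth sequences $u_k^j \in \mathcal{H}$ decreasing to $u_k$ and $u^j \in \mathcal{H}$ decreasing to $u$. Apply $(\ast)$ to the pair $(u_k^j, u^j)$ and let $j \to \infty$: the left side converges to $\tilde{d}(u_k, u)^2$ by the very definition of the extended distance recalled in Section~\ref{Section:DistanceHAndExtention}, while on the right side the Monge--Ampère measures $\eta \wedge d\eta_{u_k^j}^n$ and $\eta \wedge d\eta_{u^j}^n$ converge weakly to their $\mathcal{E}^2$-limits along the decreasing sequences (Proposition~3.1 of \cite{he_geometrical_2018}), and the integrands $(u_k^j - u^j)^2$ converge pointwise to $(u_k - u)^2$ under uniform $L^\infty$-control supplied by the approximation. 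Combining these ingredients via monotone convergence delivers
$$\tilde{d}(u_k, u)^2 \leq C_n \int_M (u_k - u)^2 \bigl(\eta\wedge d\eta_{u_k}^n + \eta\wedge d\eta_u^n\bigr).$$

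\textbf{Dominated convergence and main obstacle.} Sending $k \to \infty$, the sequence $(u_k - u)^2$ decreases pointwise to zero and is bounded by the fixed function $(u_1 - u)^2$, which is integrable against the Monge--Ampère measures of every element of $\mathcal{E}^2$ thanks to the definition~\eqref{eq:DefE2} combined with standard comparison inequalities for full-mass classes. Dominated convergence then yields $\tilde{d}(u_k, u) \to 0$. The principal technical obstacle is the second step: passing to the $j \to \infty$ limit requires coupling the weak convergence of varying Monge--Ampère measures with pointwise convergence of the squared differences, which in the non-smooth setting demands the uniform integrability estimates for the finite energy class $\mathcal{E}^2$ developed in \cite[Section~3]{he_geometrical_2018}, themselves the Sasaki analogue of Darvas' Kähler theory.
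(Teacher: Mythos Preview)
The paper does not supply its own proof of this proposition; it is quoted verbatim from \cite[Lemma~4.11]{he_geometrical_2018}. So there is no in-paper argument to compare against, and your proposal must stand on its own.

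Your overall architecture---a comparison estimate bounding the distance by an $L^2$-integral of the difference, followed by dominated convergence---is the right shape, and it is indeed the skeleton of the argument in \cite{he_geometrical_2018} (inherited from Darvas). But two steps fail as written.

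First, the derivation of $(\ast)$. The claim that the mixed measures $\eta\wedge d\eta_\phi^k\wedge d\eta_\psi^{n-k}$ are \emph{pointwise} dominated by a dimensional multiple of $\eta\wedge d\eta_\phi^n + \eta\wedge d\eta_\psi^n$ is false. In transverse dimension $n=2$, at a point where $d\eta_\phi$ and $d\eta_\psi$ diagonalise simultaneously with eigenvalues $(\lambda,\lambda^{-1})$ and $(\lambda^{-1},\lambda)$ respectively, the mixed product has density $\lambda^2+\lambda^{-2}$ while both pure products have density $1$; letting $\lambda\to\infty$ destroys any uniform constant. There is no ``Cauchy--Schwarz for positive $(1,1)$-forms'' giving what you claim. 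What the literature actually exploits is the \emph{ordering} $u\leq u_k$: for smooth ordered potentials the weak geodesic $\phi_t$ is convex in $t$ with $\dot\phi_0\geq 0$, so $0\leq\dot\phi_0\leq u_k-u$ pointwise, and Proposition~\ref{S.Prop:DistanceLimiteEpsilonGeodesics} gives directly
$$
d(u,u_k)^2=\int_M (\dot\phi_0)^2\,\eta\wedge d\eta_u^n \;\leq\; \int_M (u_k-u)^2\,\eta\wedge d\eta_u^n,
$$
with a single Monge--Amp\`ere measure and no mixed terms to control.

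Second, the asserted ``uniform $L^\infty$-control'' on $u_k^j - u^j$ is unfounded: elements of $\mathcal{E}^2$ are typically unbounded below, and the smooth approximants from Proposition~\ref{S.Prop:ApproxPSH} inherit no uniform lower bound. Passing to the limit on the right-hand side therefore cannot proceed by pairing weak convergence of measures with bounded integrands; it genuinely requires the $\mathcal{E}^2$-energy comparison inequalities you gesture at in your final paragraph (that $\int_M \phi^2\,\eta\wedge d\eta_\psi^n$ is controlled by the energies of $\phi$ and $\psi$), and these must be invoked explicitly, not as a deferred black box, for the argument to close.
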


\subsection{Non-Positive Curvature}

We first prove a CAT(0)-type inequality for $\mathcal{H}$ (Theorem \ref{S.th:Cat(0)Smooth}) and then extend it to $\mathcal{E}^2(M, \xi, d\eta)$ (Theorem \ref{S.th:Cat(0)E}). The proof is adapted from the K\"ahler case done by E. Calabi and X. X. Chen \cite[Theorem 1.1]{calabi_space_2002} and T. Darvas \cite{darvas_mabuchi_2014}.

\begin{Def}[CAT(0) spaces {\cite{bridson_metric_1999}}]
\label{Def:CAT0}
A geodesic metric space $(X, d)$ is said to be \emph{non-positively curved} in the sense of Alexandrov if for any distinct points $q,r\in X$, there exists a geodesic $\gamma : [0,1] \rightarrow X$ joining $q, r$ such that for any $a\in \gamma$ and $p\in X$ the following inequality is satisfied:
\begin{equation}
\label{eq:Cat0inequality}
d(p,a)^2 \leq \lambda d(p,r)^2 + (1-\lambda) d(p,q)^2 - \lambda(1-\lambda) d(q,r)^2.
\end{equation}
Where $\lambda = \frac{d(q,a)}{d(q,r)} \leq 1$.
\end{Def}

In this manuscript, we only give this suitable definition of CAT(0) spaces. We refer to \cite{bridson_metric_1999} for more about CAT(0) spaces.

\begin{Th}
\label{S.th:Cat(0)Smooth}
Given $p,q,r\in \mathcal{H}$ and $\lambda \in (0,1)$. If we denote $\phi_{qr}$ the weak geodesic segment between $q$ and $r$ and $a\in\phi_{qr}$ such that $\lambda d(q,r) = \tilde{d}(q,a)$ then:
$$
\tilde{d}(p,a)^2 \leq \lambda d(p,r)^2 + (1-\lambda) d(p,q)^2 - \lambda(1-\lambda) d(q,r)^2.
$$
\end{Th}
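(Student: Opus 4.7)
The plan is to adapt to the Sasaki setting the second variation argument of Calabi--Chen and Darvas. The observation is that although the weak geodesic $\phi_{qr}$ is only $\mathcal{C}^{1,\overline{1}}$, Proposition \ref{S.prop:epsGeoJacobi} provides smooth $\varepsilon$-geodesics along which the classical Riemannian computations apply, and Proposition \ref{S.Prop:DistanceLimiteEpsilonGeodesics} lets one read the Mabuchi distance off the energy of these $\varepsilon$-geodesics.

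First I would fix a small parameter $\delta > 0$ and let $s \mapsto c^{\delta}(s)$ be the smooth $\delta$-geodesic joining $q$ and $r$. Applying Proposition \ref{S.prop:epsGeoJacobi} with $\varphi_0(s) \equiv p$ and $\varphi_1(s) = c^{\delta}(s)$ produces, for every sufficiently small $\varepsilon$, a smooth two-parameter family $(t,s)\mapsto \varphi(\cdot, t, s, \varepsilon)\in\mathcal{H}$ on $[0,1]^2$ with $\varphi(\cdot, 0, s, \varepsilon) = p$ and $\varphi(\cdot, 1, s, \varepsilon) = c^{\delta}(s)$, such that for each fixed $s$ the curve $t\mapsto \varphi(\cdot, t, s, \varepsilon)$ is the $\varepsilon$-geodesic between $p$ and $c^{\delta}(s)$. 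I then set
$$
E_{\varepsilon, \delta}(s) := \int_M \bigl(\partial_t \varphi(\cdot, 1, s, \varepsilon)\bigr)^2 \, \eta \wedge d\eta^n_{\varphi(\cdot, 1, s, \varepsilon)}.
$$
Equation \eqref{S.eq:GeodesicsDirichletEpsilon} shows that the corresponding integral at any other $t\in[0,1]$ differs from $E_{\varepsilon,\delta}(s)$ by $O(\varepsilon)$, so Proposition \ref{S.Prop:DistanceLimiteEpsilonGeodesics} yields $\lim_{\varepsilon\to 0} E_{\varepsilon, \delta}(s) = d(p, c^{\delta}(s))^2$. Combined with Proposition \ref{S.prop:epsGeoJacobi}(iii) and the continuity of $\tilde d$, this reduces the theorem to establishing a CAT(0) inequality for $E_{\varepsilon,\delta}$ with error vanishing as $(\varepsilon, \delta)\to(0,0)$.

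The heart of the argument is the second variation bound
$$
\partial_s^2 E_{\varepsilon, \delta}(s) \geq 2\, d(q, r)^2 - o(1) \qquad \text{as } (\varepsilon, \delta) \to (0,0).
$$
Writing $J := \partial_s \varphi$ and using the torsion-free compatible connection $\nabla$ of Definition \ref{Def:Connection}, the second variation of the Mabuchi energy along the family of $\varepsilon$-geodesics reads
$$
\partial_s^2 E_{\varepsilon,\delta}(s) = 2\int_0^1 \bigl(\|\nabla_t J\|_{\varphi}^2 + \scal{R_\varphi(J, \partial_t\varphi)J}{\partial_t\varphi}_{\varphi}\bigr) dt + B_{\varepsilon, \delta}(s),
$$
where $R_\varphi$ is the Riemann tensor of the Mabuchi metric and $B_{\varepsilon,\delta}$ gathers the boundary contributions arising from $\nabla_t\partial_t\varphi = O(\varepsilon)$ along the $\varepsilon$-geodesic and from $\nabla_s \partial_s c^\delta = O(\delta)$ along the $\delta$-geodesic. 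Guan's non-positivity of the sectional curvature renders the curvature integrand non-negative. Since $J(t=0) = 0$, Cauchy--Schwarz gives
$$
\int_0^1 \|\nabla_t J\|_{\varphi}^2 \, dt \geq \|J(t=1)\|_{c^{\delta}(s)}^2 = \bigl\|\partial_s c^\delta(s)\bigr\|_{c^\delta(s)}^2,
$$
and this equals $d(q, r)^2 + O(\delta)$ by a further application of Proposition \ref{S.Prop:DistanceLimiteEpsilonGeodesics} to the $\delta$-geodesic $c^\delta$. The uniform $\mathcal{C}^2$-bounds of Proposition \ref{S.prop:epsGeoJacobi} control $B_{\varepsilon,\delta}$ by $O(\varepsilon) + O(\delta)$, which yields the displayed bound.

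Integrating this differential inequality twice on $[0,1]$ and evaluating at $s = \lambda$ produces the CAT(0) inequality for $E_{\varepsilon,\delta}$ up to an additive $o(1)$ error, which passes to the limit thanks to the convergences recorded above. The main obstacle is the careful bookkeeping of the two regularization parameters $(\varepsilon, \delta)$ in the second variation formula: one must verify that every error term genuinely vanishes in the limit, which crucially uses the uniform $\mathcal{C}^{1,\overline{1}}$-estimates of Proposition \ref{S.prop:epsGeoJacobi} together with the explicit Monge--Amp\`ere structure \eqref{S.eq:GeodesicDirichletMAEpsilon} on the Kähler cone. Once these bounds are in place, the use of non-positive sectional curvature runs exactly parallel to the Kähler argument of \cite{calabi_space_2002, darvas_mabuchi_2014}.
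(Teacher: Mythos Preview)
Your proposal is correct and follows the paper's strategy: the same double regularisation by smooth approximate geodesics (your $\varepsilon,\delta$ are the paper's $\varepsilon,\varepsilon'$), a second-variation computation exploiting Guan's non-positive sectional curvature, and passage to the limit via Propositions~\ref{S.Prop:DistanceLimiteEpsilonGeodesics}, \ref{S.prop:EpsilonGeodesicDecreases} and~\ref{S.Prop:DistanceContinue}.

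One slip to fix: the second-variation identity you write, $\partial_s^2 E=2\int_0^1(\|\nabla_t J\|^2+\text{curv.})\,dt+B$, is the formula for the \emph{total} energy $\int_0^1\|\partial_t\varphi\|^2_{\varphi}\,dt$, not for its value at $t=1$. You should define $E_{\varepsilon,\delta}(s)$ as this $t$-integral (as the paper does) and only invoke the $O(\varepsilon)$ comparison with the $t=1$ integrand when you pass to the limit at the end.

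There is one genuine, if minor, difference in execution. You apply the textbook second-variation formula and then Cauchy--Schwarz ($\int_0^1\|\nabla_t J\|^2\,dt\geq\|J(1)\|^2$ since $J(0)=0$); integrating by parts this way, the $\varepsilon$-error term is $-\int_0^1\langle\nabla_Y Y,\nabla_X X\rangle\,dt=-\varepsilon\int\!\!\int\nabla_Y Y\,\eta\wedge d\eta^n$, which is bounded below by $-\varepsilon C\Vol(M)$ directly from $\partial_s^2\varphi<C$ in Proposition~\ref{S.prop:epsGeoJacobi}. The paper instead first proves that $t\mapsto\|Y(t)\|_\varphi$ is convex and uses $\tfrac{d}{dt}\|Y\|\big|_{t=1}\geq\|Y(1)\|$; that route produces the error $\langle\nabla_Y\nabla_X X,Y\rangle$, whose non-negativity requires the explicit Stokes computation in the paper. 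Both arrive at $\tfrac12 E''\geq\|Y(1)\|^2-(\varepsilon+\delta)C\Vol(M)$; your organisation simply sidesteps the Stokes step.
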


It is worth mentioning that one cannot say that $\mathcal{H}$ is a CAT(0) space since the element $a$ above only lies in $\mathcal{H}_{\Delta}$ ($\mathcal{H}$ is not a geodesic metric space).

\begin{proof}
We fix $\varepsilon, \varepsilon' > 0$. And three potentials $p,q,r\in \mathcal{H}$.
Proposition \ref{S.prop:epsGeoJacobi} applied to the constant path $\phi_0 \equiv p$ and $\phi_1$ the $\varepsilon'$-geodesic joining $q$ to $r$ gives a two-parameter family of curves: $\phi(.,t,s,\varepsilon)$. Recall that for fixed $s$, the path $t\mapsto \phi(.,t,s,\varepsilon)$ is a $\varepsilon$ geodesic.
We denote $X=\frac{\partial \phi}{\partial t}$ and $Y = \frac{\partial \phi}{\partial s}$. Finally, we write $E(s)$ the total energy of the $\varepsilon$-geodesic between $p$ and $\phi(.,1,s,\varepsilon)$:
$
E(s) := \int_0^1 \scal{X}{X}_{\phi}dt.
$
Let's compute the first derivative of $E$.
\begin{align*}
\frac{1}{2}\frac{d E}{d s} &= \frac{1}{2}\int_0^1 \frac{\partial}{\partial s} \scal{X}{X}_{\phi} dt 
											= \int_0^1 \scal{\nabla_Y X}{X}_{\phi} dt
											= \int_0^1 \left( \frac{\partial}{\partial t}\scal{X}{Y}_{\phi} - \scal{\nabla_X X}{Y}_\phi \right)dt\\
											&= \scal{X}{Y}_{\phi}\vert_{t = 1} - \int_0^1 \scal{\nabla_X X}{Y}_\phi dt.
\end{align*}
The last term in the above equation can be written thanks to \eqref{S.eq:GeodesicsDirichletEpsilon} as:
$$
\int_0^1\int_M Y \nabla_X X \eta\wedge d\eta_\phi^n dt = \varepsilon\int_0^1\int_M \frac{\partial \phi}{\partial s} \eta\wedge d\eta^n dt.
$$
Thus,
\begin{align}
\frac{1}{2}\frac{d E}{ds} 
&= \scal{X}{Y}_{\phi}\vert_{t = 1} - \varepsilon\int_0^1\int_M \frac{\partial \phi}{\partial s} \eta\wedge d\eta^n dt.
\end{align}
Before computing the second derivative of $E$, we prove that, at $t=1$:
\begin{equation}
\label{S.eq:inegaliteSurYUsingNegativeCurvature}
\scal{Y}{\nabla_X Y}_{\phi} \geq \scal{Y}{Y}_{\phi}.
\end{equation}
Indeed, setting $H = \frac{\eta \wedge d\eta^n}{\eta\wedge d\eta_{\phi}^n}$, since the sectional curvature is negative and $\nabla_X Y = \nabla_Y X$, 
\begin{align*}
\frac{1}{2}\frac{\partial^2}{\partial t^2} \scal{Y}{Y}_{\phi} &= \scal{\nabla_Y X}{\nabla_X Y}_{\phi} + \scal{\nabla_X \nabla_Y X}{Y}_{\phi}\\
		&= \scal{\nabla_X Y}{\nabla_X Y}_{\phi} + \scal{\nabla_Y \nabla_X X}{Y}_{\phi} - K(X,Y) \\
		&\geq \scal{\nabla_X Y}{\nabla_X Y}_{\phi} + \varepsilon\scal{\nabla_Y \left(\frac{\eta \wedge d\eta^n}{\eta\wedge d\eta_{\phi}^n} \right)}{Y}_{\phi} \\
		&\geq \scal{\nabla_X Y}{\nabla_X Y}_{\phi} + \varepsilon \int_M \frac{\partial \phi}{\partial s}\left(\frac{\partial H}{\partial s} - \frac{1}{4} g_{\phi}\left(\nabla H, \nabla \frac{\partial \phi}{\partial s}\right) \right) \eta\wedge d\eta_{\phi}^n.
\end{align*}
On the other hand, derivating the identity $H\cdot (\eta \wedge d\eta_\phi^n) = \eta \wedge d\eta^n$ with respect to $s$ gives:
$$
\frac{\partial H}{\partial s} \eta\wedge d\eta_{\phi}^n + n H \eta\wedge dd^c \left(\frac{\partial \phi}{\partial s} \right)\wedge d\eta_{\phi}^{n-1} = 0.
$$
Thus the last term above simplifies in
\begin{align*}
-n\varepsilon&\int_M \frac{\partial \phi}{\partial s} H \eta \wedge dd^c\left(\frac{\partial \phi}{\partial s} \right)\wedge d\eta_{\phi}^{n-1} - \frac{\varepsilon}{4}\int_M \frac{\partial \phi}{\partial s} g_\phi \left(\nabla H, \nabla \frac{\partial \phi}{\partial s} \right) \eta\wedge d\eta_{\phi}^n \\
&= \frac{\varepsilon}{4} \int_M g_{\phi}\left(\nabla\left(\frac{\partial \phi}{\partial s} H\right), \nabla \frac{\partial \phi}{\partial s}\right) - \frac{\partial \phi}{\partial s} g_{\phi}\left(\nabla H, \nabla \frac{\partial \phi}{\partial s} \right)\eta\wedge d\eta_{\phi}^n\\
&= \frac{\varepsilon}{4}\int_M g_{\phi}\left(\nabla \frac{\partial \phi}{\partial s} ,\nabla \frac{\partial \phi}{\partial s}\right) \eta\wedge d\eta^n \geq 0.
\end{align*}
In the above computation we used Stokes' theorem. Therefore,
$$
\frac{1}{2}\frac{\partial^2}{\partial t^2} \scal{Y}{Y}_{\phi} \geq \scal{\nabla_X Y}{\nabla_X Y}_{\phi}.
$$
This shows that $t\mapsto\vert Y(t)\vert_{\phi}$ (norm associated to $g_{\phi}$) is convex. But $Y(0) = 0$ so we get the claim \eqref{S.eq:inegaliteSurYUsingNegativeCurvature}. We can now compute the second derivative of $E(s)$:
\begin{align*}
\frac{1}{2}\frac{d^2 E}{ds^2} &= \frac{d}{ds}\scal{X}{Y}_{\phi} - \varepsilon\int_0^1\int_M \frac{\partial^2 \phi}{\partial s^2}\eta\wedge d\eta^n dt \\
					&= \scal{\nabla_X Y}{Y}_{\phi}\vert_{t=1} + \scal{X}{\nabla_Y Y}_{\phi}\vert_{t=1} - \varepsilon\int_0^1\int_M \frac{\partial^2 \phi}{\partial s^2}\eta\wedge d\eta^n dt \\
					&\geq \scal{Y}{Y}_{\phi}\vert_{t=1} + \int_{M}\underbrace{\frac{\partial \phi}{\partial t}}_{\geq -C}\underbrace{\nabla_Y Y\eta\wedge d\eta_{\phi}^n}_{=\varepsilon' \eta\wedge d\eta^n}\vert_{t=1} - \varepsilon\int_0^1\int_M \underbrace{\frac{\partial^2 \phi}{\partial s^2}}_{\leq C}\eta\wedge d\eta^n dt \\
					&\geq \scal{Y}{Y}_{\phi}\vert_{t=1} - (\varepsilon+\varepsilon') C \Vol(M).
\end{align*}

Where we used the fact that $\phi_1 = \phi(., 1, s, \varepsilon)$ is an $\varepsilon'$-geodesic, and the estimates of Proposition \ref{S.prop:epsGeoJacobi}. But $\scal{Y}{Y}_{\phi}\vert_{t=1}$ is exactly the energy of the $\varepsilon'$-geodesic joining $q$ and $r$ which we denote $E^{\varepsilon'}_{(qr)}$. Thus, we finally have:
$$
\frac{1}{2}\frac{d^2 E}{ds^2} \geq E^{\varepsilon'}_{(qr)} - (v\varepsilon+\varepsilon') C \Vol(M).
$$
This implies:
$
E(s) \leq (1-s)E(0) + s E(1) + (s^2-s)\left( E^{\varepsilon'}_{(qr)} - (\varepsilon+\varepsilon') C \Vol(M) \right).
$
Now, we fix $s$ and let $\varepsilon \rightarrow 0$. Proposition \ref{S.Prop:DistanceLimiteEpsilonGeodesics} gives ;
$$
d(p, \phi_1(.,s))^2 \leq  (1-s)d(p,q)^2 + s d(p,r)^2 + (s^2-s)E^{\varepsilon'}_{(qr)} - \varepsilon' C\Vol(M) (s^2-s).
$$

But $\phi_1(.,s)$ being on the $\varepsilon'$-geodesic $\phi_1$, denoting $\phi_{qr}$ the weak geodesic segment between $q$ and $r$, we have that $\phi_1(.,s)$ increases to $\phi_{qr}(s)$ as soon as $\varepsilon'$ decreases to $0$. So Proposition \ref{S.Prop:DistanceContinue} gives that $d(p, \phi_1(.,s)) \rightarrow \tilde{d}(p, \phi_{qr}(s))$, and thus:
$$
\tilde{d}(p, \phi_{qr}(s))^2 \leq  (1-s)d(p,q)^2 + s d(p,r)^2 + (s^2-s)d(q,r)^2.
$$
The conclusion follows from Proposition \ref{S.prop:geodesicsAreMetricGeodesics}.
\end{proof}

\begin{Th}
\label{S.th:Cat(0)E}
$\mathcal{E}^2(M,\xi,d\eta)$ is a CAT(0) space.
\end{Th}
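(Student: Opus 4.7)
The plan is to deduce Theorem \ref{S.th:Cat(0)E} from the smooth CAT(0)-type inequality of Theorem \ref{S.th:Cat(0)Smooth} by a straightforward approximation argument. First, I would record that $(\mathcal{E}^2(M,\xi,d\eta), \tilde{d})$ is a geodesic metric space: given $q, r \in \mathcal{E}^2$ with smooth decreasing approximations $q^k \downarrow q$ and $r^k \downarrow r$ in $\mathcal{H}$ (Proposition \ref{S.Prop:ApproxPSH}), the construction in \eqref{S.def:WeakGeodesicInE2} produces a map $t \mapsto \phi_t$ which is a metric geodesic between $q$ and $r$ by Proposition \ref{S.prop:geodesicsAreMetricGeodesics}.

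Next, fix $p,q,r\in\mathcal{E}^2(M,\xi,d\eta)$ and choose decreasing smooth approximations $p^k \downarrow p$, $q^k \downarrow q$, $r^k \downarrow r$ in $\mathcal{H}$. Let $\phi^k_t \in \mathcal{H}_\Delta$ be the weak geodesic between $q^k$ and $r^k$; by construction, for each fixed $t\in(0,1)$ the sequence $\phi^k_t$ decreases to $\phi_t$. Applying Theorem \ref{S.th:Cat(0)Smooth} to the triple $(p^k, q^k, r^k)$ at proportional parameter $t$ yields
$$\tilde{d}(p^k, \phi^k_t)^2 \leq t\, d(p^k, r^k)^2 + (1-t)\, d(p^k, q^k)^2 - t(1-t)\, d(q^k, r^k)^2.$$

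The final step is to let $k \to \infty$. The right-hand side converges to $t\,\tilde{d}(p,r)^2 + (1-t)\,\tilde{d}(p,q)^2 - t(1-t)\,\tilde{d}(q,r)^2$ directly from the definition of $\tilde{d}$ as the limit of distances along decreasing approximations. For the left-hand side, a double triangle inequality gives
$$\bigl|\tilde{d}(p^k, \phi^k_t) - \tilde{d}(p, \phi_t)\bigr| \leq \tilde{d}(p^k, p) + \tilde{d}(\phi^k_t, \phi_t),$$
and both terms on the right tend to zero by Proposition \ref{S.Prop:DistanceContinue}, since $p^k \downarrow p$ and $\phi^k_t \downarrow \phi_t$ within $\mathcal{E}^2(M,\xi,d\eta)$. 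This produces the CAT(0) inequality of Definition \ref{Def:CAT0} along the geodesic $\phi_t$ for the arbitrary triple $(p,q,r)$.

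I do not expect a serious obstacle: the heavy analytic work has been absorbed into Theorem \ref{S.th:Cat(0)Smooth} and into the He--Li machinery that extends the Mabuchi distance to $\mathcal{E}^2$. The only point that genuinely needs checking is joint continuity of $\tilde{d}$ along the decreasing approximations in both arguments simultaneously, which reduces cleanly to Proposition \ref{S.Prop:DistanceContinue}. A minor subtlety is that the parameter assignment $t = \lambda$ survives the passage to the limit, but this is immediate once $\phi_t$ is known to be a metric geodesic via Proposition \ref{S.prop:geodesicsAreMetricGeodesics}.
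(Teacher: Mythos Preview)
Your proposal is correct and follows essentially the same approach as the paper: approximate $p,q,r$ by decreasing sequences in $\mathcal{H}$, apply Theorem \ref{S.th:Cat(0)Smooth} at level $k$, and pass to the limit using Proposition \ref{S.Prop:DistanceContinue} together with Proposition \ref{S.prop:geodesicsAreMetricGeodesics}. If anything, your write-up is more explicit than the paper's about handling the left-hand side via the triangle inequality.
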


\begin{proof}
Let $p,q,r \in \mathcal{E}^2(M,\xi, d\eta)$. We consider decreasing approximations: $p^k, q^k, r^k \in \mathcal{H}$. Theorem \ref{S.th:Cat(0)Smooth} gives:
$$
\tilde{d}(p^k, \phi_{q^k r^k}(s))^2 \leq  (1-s)d(p^k,q^k)^2 + s d(p^k,r^k)^2 + (s^2-s)d(q^k,r^k)^2.
$$
where $\phi_{q^k r^k}\in \mathcal{H}_{\Delta}$ is the weak geodesic between $q^k$ and $r^k$ which decreases to the metric geodesic defined by (\ref{S.def:WeakGeodesicInE2}). Using Proposition \ref{S.Prop:DistanceContinue} gives the CAT(0) inequality for $\mathcal{E}^2(M, \xi, d\eta)$ since $\phi_{qr}$ is a geodesic in the metric sense according to Proposition \ref{S.prop:geodesicsAreMetricGeodesics}.
\end{proof}

\begin{footnotesize}
\addcontentsline{toc}{section}{Bibliography}
\bibliographystyle{abbrv}
\bibliography{Bibli}

\begin{thebibliography}{10}

\bibitem{abreu1997kahler}
M.~Abreu.
\newblock K{\"a}hler geometry of toric varieties and extremal metrics.
\newblock {\em International Journal of Mathematics}, 9(06):641--651, 1998.

\bibitem{acharya1998branes}
B.~S. Acharya, J.~Figueroa-O'Farrill, C.~Hull, and B.~Spence.
\newblock Branes at conical singularities and holography.
\newblock {\em arXiv preprint hep-th/9808014}, 1998.

\bibitem{apostolov2021weighted}
V.~Apostolov, D.~M. Calderbank, and E.~Legendre.
\newblock Weighted {K}-stability of polarized varieties and extremality of
  {S}asaki manifolds.
\newblock {\em Advances in Mathematics}, 391:107969, 2021.

\bibitem{arnoldMathematical}
V.~I. Arnold.
\newblock {\em Mathematical Methods of Classical Mechanics}.
\newblock Springer, 1978.

\bibitem{aubin_equation_1978}
T.~Aubin.
\newblock Equations de type {M}onge-{A}mpère sur les variétés kählérienne
  compactes.
\newblock {\em Bull. Sci. Math.}, 102:63--95, 1978.

\bibitem{berman2014convexity}
R.~Berman and B.~Berndtsson.
\newblock Convexity of the {K}-energy on the space of {K}{\"a}hler metrics and
  uniqueness of extremal metrics.
\newblock {\em Journal of the American Mathematical Society}, 30(4):1165--1196,
  2017.

\bibitem{boyer_3-sasakian_1998}
C.~P. Boyer and K.~Galicki.
\newblock 3-{S}asakian manifolds.
\newblock {\em arXiv preprint hep-th/9810250}, 1998.

\bibitem{boyer1998sasakianeinstein}
C.~P. Boyer and K.~Galicki.
\newblock On {S}asakian-{E}instein geometry.
\newblock {\em arXiv preprint hep-th/9811098}, 1998.

\bibitem{boyer_sasakian_2007}
C.~P. Boyer and K.~Galicki.
\newblock {\em Sasakian Geometry}.
\newblock Oxford University Press, 2007.

\bibitem{boyerEinsteinSphere2005}
C.~P. Boyer, K.~Galicki, and J.~Koll{\'a}r.
\newblock Einstein metrics on spheres.
\newblock {\em Annals of Mathematics}, (162):557--580, 2005.

\bibitem{boyer_3-sasakian_1993}
C.~P. Boyer, K.~Galicki, and B.~M. Mann.
\newblock $3$-{S}asakian manifolds.
\newblock {\em Proceedings of the Japan Academy, Series A, Mathematical
  Sciences}, 69(8):335--340.

\bibitem{boyer_quaternionic_1993}
C.~P. Boyer, K.~Galicki, and B.~M. Mann.
\newblock Quaternionic reduction and {E}instein manifolds.
\newblock {\em Communications in Analysis and Geometry}, 1(2):229--279.

\bibitem{boyer_extremal_2013}
C.~P. Boyer and C.~W. Tønnesen-Friedman.
\newblock Extremal {S}asakian geometry on $\mathbb{T}^2\times \mathbb{S}^3$ and
  related manifolds.
\newblock {\em Compositio Mathematica}, 149(8):1431–1456, Jun 2013.

\bibitem{boyer2018relative}
C.~P. Boyer and C.~van Coevering.
\newblock Relative {K}-stability and extremal {S}asaki metrics.
\newblock {\em Mathematical Research Letters}, 25(1):1--19, 2018.

\bibitem{bridson_metric_1999}
M.~R. Bridson and A.~Haefliger.
\newblock {\em Metric {Spaces} of {Non}-{Positive} {Curvature}}, volume~61 of
  {\em Grundlehren der mathematischen {Wissenschaften}}.
\newblock Springer Berlin Heidelberg, 1999.

\bibitem{CalabiExtremal1982}
E.~Calabi.
\newblock Extremal {K}ähler metrics.
\newblock {\em Seminar on Differential Geometry. (AM-102)}, pages 259--290,
  1982.

\bibitem{Calabi1985}
E.~Calabi.
\newblock {\em Extremal K{\"a}hler Metrics {II}}, pages 95--114.
\newblock Springer Berlin Heidelberg, Berlin, Heidelberg, 1985.

\bibitem{calabi_space_2002}
E.~Calabi and X.~Chen.
\newblock The space of {K\"ahler} metrics {II}.
\newblock {\em Journal of Differential Geometry}, 61(2):173--193, 2002.

\bibitem{chen_space_2000}
X.~Chen.
\newblock The space of {K\"ahler} metrics.
\newblock {\em Journal of Differential Geometry}, 56(2):189--234, 2000.

\bibitem{Chen_Space_2009}
X.~Chen.
\newblock Space of {K}ähler metrics {III} – on the lower bound of the
  {C}alabi energy and geodesic distance.
\newblock {\em Inventiones mathematicae}, 175:453--503, 03 2009.

\bibitem{chen_constant_2017}
X.~Chen and J.~Cheng.
\newblock On the constant scalar curvature {K}ähler metrics, apriori
  estimates.
\newblock 2017.

\bibitem{chen_constant_2018-1}
X.~Chen and J.~Cheng.
\newblock On the constant scalar curvature {K}ähler metrics, existence
  results.
\newblock 2018.

\bibitem{chen_constant_2018}
X.~Chen and J.~Cheng.
\newblock On the constant scalar curvature {K}ähler metrics, general
  automorphism group.
\newblock 2018.

\bibitem{chen2012kahlereinstein1}
X.~Chen, S.~Donaldson, and S.~Sun.
\newblock K{\"a}hler-{E}instein metrics on {F}ano manifolds. {I}: Approximation
  of metrics with cone singularities.
\newblock {\em Journal of the American Mathematical Society}, 28(1):183--197,
  2015.

\bibitem{chen2012kahlereinstein2}
X.~Chen, S.~Donaldson, and S.~Sun.
\newblock K{\"a}hler-{E}instein metrics on {F}ano manifolds. {II}: Limits with
  cone angle less than 2{$\pi$}.
\newblock {\em Journal of the American Mathematical Society}, 28(1):199--234,
  2015.

\bibitem{chen2013kahlereinstein}
X.~Chen, S.~Donaldson, and S.~Sun.
\newblock K{\"a}hler-{E}instein metrics on {F}ano manifolds. {III}: Limits as
  cone angle approaches 2{$\pi$} and completion of the main proof.
\newblock {\em Journal of the American Mathematical Society}, 28(1):235--278,
  2015.

\bibitem{Chen_Sun_2009}
X.~Chen and S.~Sun.
\newblock Space of {K}{\"a}hler metrics ({V})--{K}{\"a}hler quantization.
\newblock pages 19--41. Springer, 2012.

\bibitem{Chen_Tian_2008}
X.~Chen and G.~Tian.
\newblock {G}eometry of {K}ähler metrics and foliations by holomorphic discs.
\newblock {\em Publ. Math. Inst. Hautes Etudes Sci.}, 2008.

\bibitem{Chu_regularityC11_2018}
J.~Chu, V.~Tosatti, and B.~Weinkove.
\newblock {$\mathcal{C}^{1,1}$} regularity for degenerate complex
  {M}onge–{A}mp{\`e}re equations and geodesic rays.
\newblock {\em Communications in Partial Differential Equations},
  43(2):292--312, 2 2018.

\bibitem{collins2019sasaki}
T.~Collins and G.~Sz{\'e}kelyhidi.
\newblock Sasaki--{E}instein metrics and {K}--stability.
\newblock {\em Geometry \& Topology}, 23(3):1339--1413, 2019.

\bibitem{collins2018k}
T.~C. Collins and G.~Sz{\'e}kelyhidi.
\newblock K-semistability for irregular sasakian manifolds.
\newblock {\em Journal of Differential Geometry}, 109(1):81--109, 2018.

\bibitem{collins2012ksemistability}
T.~C. Collins and G.~Székelyhidi.
\newblock K-semistability for irregular {S}asakian manifolds.
\newblock {\em J. Differential Geom.}, 109(1):81--109, May 2018.

\bibitem{darvas2014morse}
T.~Darvas.
\newblock Morse theory and geodesics in the space of {K}{\"a}hler metrics.
\newblock {\em Proceedings of the American Mathematical Society},
  142(8):2775--2782, 2014.

\bibitem{darvas_mabuchi_2014}
T.~Darvas.
\newblock The {Mabuchi} completion of the space of {K}\"ahler potentials.
\newblock {\em American Journal of Mathematics}, 139(5):1275--1313, 2017.

\bibitem{darvas2017weak}
T.~Darvas.
\newblock Weak geodesic rays in the space of {K}{\"a}hler potentials and the
  class {$\mathcal{E}(X,\omega)$}.
\newblock {\em Journal of the Institute of Mathematics of Jussieu},
  16(4):837--858, 2017.

\bibitem{darvas2019isometries}
T.~Darvas.
\newblock The isometries of the space of {K}{\"a}hler metrics.
\newblock {\em Journal of the European Mathematical Society}, 2021.

\bibitem{darvas_weak_2012}
T.~Darvas and L.~Lempert.
\newblock Weak geodesics in the space of {K}{\"a}hler metrics.
\newblock {\em Mathematical research letters}, 19(5):1127--1135, 2012.

\bibitem{donaldson_Kahler_1999}
S.~K. Donaldson.
\newblock Symmetric spaces, {K\"ahler} geometry and {Hamiltonian} dynamics.
\newblock {\em Vol. 196 of A.M.S. Transl. Ser. 2, 13–33. Amer. Math. Soc.},
  1999.

\bibitem{DonaldsonScalar2001}
S.~K. Donaldson.
\newblock Constant scalar curvature metrics on toric surfaces.
\newblock {\em Geometric and Functional Analysis}, 19(1):83--136, 2009.

\bibitem{friedrich_7-dimensional_1990}
T.~Friedrich and I.~Kath.
\newblock 7-dimensional compact {R}iemannian manifolds with {K}illing spinors.
\newblock {\em Communications in mathematical physics}, 133(3):543--561, 1990.

\bibitem{Futaki1983}
A.~Futaki.
\newblock An obstruction to the existence of {E}instein {K}{\"a}hler metrics.
\newblock {\em Inventiones mathematicae}, 73(3):437--443, 1983.

\bibitem{futaki_transverse_2009}
A.~Futaki, H.~Ono, G.~Wang, et~al.
\newblock Transverse {K}{\"a}hler geometry of {S}asaki manifolds and toric
  {S}asaki-{E}instein manifolds.
\newblock {\em Journal of Differential Geometry}, 83(3):585--636, 2009.

\bibitem{gauntlett2004}
J.~P. Gauntlett, D.~Martelli, J.~Sparks, D.~Waldram, et~al.
\newblock Sasaki-{E}instein metrics on $\mathbb{S}^{2}\times \mathbb{S}^{3}$.
\newblock {\em Advances in Theoretical and Mathematical Physics},
  8(4):711--734, 2004.

\bibitem{godlinski_locally_2000}
M.~Godlinski, W.~Kopczynski, and P.~Nurowski.
\newblock Locally {S}asakian manifolds.
\newblock {\em Classical and Quantum Gravity}, 17(18):L105, 2000.

\bibitem{guan_geodesic_nodate}
P.~Guan and X.~Zhang.
\newblock A geodesic equation in the space of {S}asakian metrics.
\newblock {\em Geometry and analysis}, (1):303--318, 2010.

\bibitem{guan_regularity_2009}
P.~Guan and X.~Zhang.
\newblock Regularity of the geodesic equation in the space of {S}asakian
  metrics.
\newblock {\em Advances in Mathematics}, 230(1):321--371, 2012.

\bibitem{guedj_weighted_2007}
V.~Guedj and A.~Zeriahi.
\newblock The weighted {Monge}–{Ampère} energy of quasiplurisubharmonic
  functions.
\newblock {\em Journal of Functional Analysis}, 250(2):442--482, Sept. 2007.

\bibitem{guedj_degenerate_2017}
V.~Guedj and A.~Zeriahi.
\newblock {\em Degenerate Complex Monge--Amp{\`e}re Equations}.
\newblock {EMS} {Tracts} in {Mathematics}. 2017.

\bibitem{he2014transverse}
W.~He.
\newblock On the transverse scalar curvature of a compact {S}asaki manifold.
\newblock {\em Complex Manifolds}, 1(1), 2014.

\bibitem{he2018scalar}
W.~He.
\newblock Scalar curvature and properness on {S}asaki manifolds.
\newblock {\em arXiv preprint arXiv:1802.03841}, 2018.

\bibitem{he_geometrical_2018}
W.~He and J.~Li.
\newblock Geometric pluripotential theory on {S}asaki manifolds.
\newblock {\em The Journal of Geometric Analysis}, pages 1--87, 2019.

\bibitem{Legendre_2011}
E.~Legendre.
\newblock Existence and non-uniqueness of constant scalar curvature toric
  {S}asaki metrics.
\newblock {\em Compositio Mathematica}, 147(5):1613–1634, Jul 2011.

\bibitem{lempert2013geodesics}
L.~Lempert and L.~Vivas.
\newblock Geodesics in the space of k{\"a}hler metrics.
\newblock {\em Duke Mathematical Journal}, 162(7):1369--1381, 2013.

\bibitem{li2021notes}
C.~Li.
\newblock Notes on weighted {K}{\"a}hler-{R}icci solitons and application to
  {R}icci-flat {K}{\"a}hler cone metrics.
\newblock {\em arXiv e-prints}, pages arXiv--2107, 2021.

\bibitem{mabuchi_symplectic_1986}
T.~Mabuchi.
\newblock Some symplectic geometry on compact {K}{\"a}hler manifolds. {I}.
\newblock {\em Osaka journal of mathematics}, 24(2):227--252, 1987.

\bibitem{maldacena1997large}
J.~Maldacena.
\newblock The large-{N} limit of superconformal field theories and
  supergravity.
\newblock {\em International journal of theoretical physics}, 38(4):1113--1133,
  1999.

\bibitem{Martelli_2006}
D.~Martelli, J.~Sparks, and S.-T. Yau.
\newblock The geometric dual of a–maximisation for toric
  {S}asaki–{E}instein manifolds.
\newblock {\em Communications in Mathematical Physics}, 268(1):39–65, Aug
  2006.

\bibitem{Martelli_2008}
D.~Martelli, J.~Sparks, and S.-T. Yau.
\newblock {S}asaki–{E}instein manifolds and volume minimisation.
\newblock {\em Communications in Mathematical Physics}, 280(3):611–673, Apr
  2008.

\bibitem{morrison1998nonspherical}
D.~R. Morrison and M.~R. Plesser.
\newblock Non-spherical horizons, {I}.
\newblock {\em arXiv preprint hep-th/9810201}, 1998.

\bibitem{ross2007study}
J.~Ross and R.~Thomas.
\newblock A study of the hilbert-mumford criterion for the stability of
  projective varieties.
\newblock {\em Journal of Algebraic Geometry}, 16(2):201--255, 2007.

\bibitem{sasaki_differentiable_1960}
S.~Sasaki.
\newblock On differentiable manifolds with certain structures which are closely
  related to almost contact structure {I}.
\newblock {\em Tohoku Mathematical Journal, Second Series}, 12(3):459--476,
  1960.

\bibitem{semmes_complex_1992}
S.~Semmes.
\newblock Complex {M}onge-{A}mpère and symplectic manifolds.
\newblock {\em American Journal of Mathematics}, pages 495--550, 1992.

\bibitem{Tian1997}
G.~Tian.
\newblock K{\"a}hler-{E}instein metrics with positive scalar curvature.
\newblock {\em Inventiones Mathematicae}, 130(1):1--37, 1997.

\bibitem{Tian2015}
G.~Tian.
\newblock K{\"a}hler--{E}instein metrics on {F}ano manifolds.
\newblock {\em Japanese Journal of Mathematics}, 10(1):1--41, 2015.

\bibitem{van_coevering_monge-amp`ere_2015}
C.~van Coevering.
\newblock Monge-{A}mpère operators, energy functionals, and uniqueness of
  {S}asaki-extremal metrics.
\newblock {\em arXiv preprint arXiv:1511.09167}, 2015.

\bibitem{yau78}
S.-T. Yau.
\newblock On the {R}icci curvature of a compact {K}{\"a}hler manifold and the
  complex {M}onge-{A}mp{\'e}re equation, {I}.
\newblock {\em Communications on pure and applied mathematics}, 31(3):339--411,
  1978.

\end{thebibliography}
\end{footnotesize}

\end{document}